\DeclareMathOperator{\RE}{Re}
\DeclareMathOperator{\IM}{Im}
 \def  \E{e^{i \theta}}
 \def \a{\alpha}
  \def \vU {\varUpsilon}
\numberwithin{equation}{section}
\newtheorem{theorem}{Theorem}[section]
\newtheorem{lemma}[theorem]{Lemma}
\newtheorem{corollary}[theorem]{Corollary}
\theoremstyle{remark}
\newtheorem{remark}{Remark}[section]
\newcommand{\D}{\mathbb{D}}
\newcommand{\C}{\mathbb{C}}
\begin{document}
\title[Sharp Bohr Radius Constants...]{Sharp Bohr Radius Constants For Certain  Analytic Functions }
\author[S. Anand]{Swati Anand}

\address{Department of Mathematics, University of Delhi,
Delhi--110 007, India}
\email{swati\_anand01@yahoo.com}
\author [N.K. Jain]{Naveen Kumar Jain}
\address {Department of Mathematics, Aryabhatta College, Delhi-110021,India}
\email{naveenjain05@gmail.com}
\author [S. Kumar]{Sushil Kumar}
\address {Bharati Vidyapeeth's college of Engineering, Delhi-110063, India}
\email{sushilkumar16n@gmail.com}

\begin{abstract}The Bohr radius for a class $\mathcal{G}$  consisting of analytic functions $f(z)=\sum_{n=0}^{\infty}a_nz^n$ in unit disc $\mathbb{D}=\{z\in\mathbb{C}:|z|<1\}$ is the largest $r^*$ such that every function $f$ in the class $\mathcal{G}$ satisfies the inequality
\begin{equation*}
d\left(\sum_{n=0}^{\infty}|a_nz^n|, |f(0)|\right) = \sum_{n=1}^{\infty}|a_nz^n|\leq d(f(0), \partial f(\mathbb{D}))
\end{equation*}
for   all $|z|=r \leq r^*$, where $d$ is the Euclidean distance.
In this paper, our aim is  to determine the  Bohr radius for the classes of  analytic functions $f$ satisfying  differential subordination relations $zf'(z)/f(z) \prec h(z)$ and  $f(z)+\beta z f'(z)+\gamma z^2 f''(z)\prec h(z)$, where $h$ is the Janowski function.  Analogous  results are  obtained for the classes of $\alpha$-convex functions and typically real functions, respectively. All obtained results are sharp.


\end{abstract}

\keywords{Bohr radius; analytic functions; Janowski  functions, differential subordination; typically real functins; $\alpha$-convex functions.}

\subjclass[2010]{30C80, 30C45}

\maketitle
\section{Introduction}\label{sec1}
In recent years, the Bohr radius problems attracted the attention of several researchers in various direction in geometric function theory. The Bohr inequality has emerged as an active area of research after Dixon \cite{D1}  used it to disprove a conjecture in Banach algebra.
Let $\mathbb{D}:=\{z\in\mathbb{C}:|z|<1\}$  be the open  unit disc in $\mathbb{C}$ and $\mathcal{H}(\mathbb{D}, \Omega)$ denote the class of analytic functions mapping unit disc into  a domain $\Omega$. Let $\mathcal{A}$ denote the class of analytic functions in $\mathbb{D}$ normalized by $f(0)=0=f'(0)-1.$  Let $\mathcal{S}$ denote the subclass of $\mathcal{A}$ consisting of univalent functions. For  two analytic functions $f$ and $g$  in $\mathbb{D}$,  the function $f$ is said to be subordinate to $g$, written $f(z)\prec g(z)$, if there is an analytic  map $w:\mathbb{D}\rightarrow\mathbb{D}$ with $w(0)=0$ satisfying $f(z)=g(w(z))$. In particular, if the function $g$ is univalent in $\mathbb{D}$, then $f$ is subordinate to $g$ is equivalent to $f(0)=g(0)$ and $f(\mathbb{D})\subset g(\mathbb{D})$.
Let $\phi$ be the analytic function with positive real part
in $\mathbb{D}$ that map the unit disc $\mathbb{D}$ onto regions starlike with respect to $1$, symmetric with respect
to the real axis and normalized by the conditions $\phi(0) = 1$ and $\phi'(0) > 0$. For such functions, Ma and Minda \cite{ma} introduced the following classes:
\[\mathcal{ST}(\phi):=\left\{f\in\mathcal{A};\frac{zf'(z)}{f(z)}\prec\phi(z)\right\}\quad\text{and}\quad
\mathcal{CV}(\phi):=\left\{f\in\mathcal{A};1+\frac{zf''(z)}{f'(z)}\prec\phi(z)\right\}.\]
On taking $\phi(z)=(1+Az)/(1+Bz)(-1\leq B<A\leq1)$, the class $\mathcal{ST}(\phi)$ reduces to  the familiar class consisting of  Janowski starlike functions \cite{jan1}, denoted by $\mathcal{ST}[A,B]$. The special case $A=1-2\alpha$ and $B=-1$, that is, $\phi(z)=(1+(1-2\alpha)z)/(1-z)$, $0\leq\alpha< 1$ yield the classes $\mathcal{ST}(\alpha)$ and $\mathcal{CV}(\alpha)$ of  starlike and convex functions of order $\alpha$, respectively. In particular, $\alpha = 0$, that is, $A = 1$ and $B = −1$ leads to the usual classes $\mathcal{ST}$ and $\mathcal{CV}$ of starlike and convex functions, respectively.
For $A=1$ and $B=(1-M)/M$, $M>1/2$, we obtain the class $\mathcal{ST}(M)$ introduced by Janowski\cite{jan2}. In 1939, Robertson \cite{rob}  introduced a very well known class $\mathcal{ST}^{(\beta)}:=\mathcal{ST}[\beta, -\beta] $, $0<\beta\leq 1$. Also, $ \mathcal{ST}_{(\beta)}:=\mathcal{ST}[\beta,0]$ leads to a class which was introduced by  {MacGregor \cite{mac}}.

In 1914,  Bohr \cite{B1}  discovered that if a power series of an analytic function converges in the unit disc and its sum has a modulus less than one, then  the sum of the absolute values of its terms is again less than one in the disc  $|z|\leq 1/6$.
Wiener, Riesz and Schur  independently proved that the Bohr's result holds in the disc $|z|\leq 1/3$ and the number $1/3$ is best possible. For the class of functions $f \in \mathcal{H}(\mathbb{D}, \mathbb{D})$, the number $1/3$ is commonly called the Bohr radius, while the inequality $\sum_{n=0}^\infty |a_nz^n|\leq1$ is known as the Bohr inequality. Later on,  various proof of Bohr's inequality were given in \cite{P2, P3, P4}.
Using the Euclidean distance $d$, the  Bohr inequality for a function  $f(z)=\sum_{n=0}^\infty a_nz^n$ is written  as
\[d\left(\sum_{n=0}^{\infty}|a_nz^n|, |a_0|\right)=\sum_{n=1}^{\infty}|a_nz^n|\leq 1-|f(0)|= d(f(0), \partial \mathbb{D}), \]
where  $\partial \mathbb{D}$ is the boundary of the disc $\mathbb{D}$.
For any domain $\Omega$  and all functions $f \in \mathcal{H}(\mathbb{D},\Omega)$, the Bohr radius is the  largest radius $r^* > 0$ such that
\begin{equation*}\label{eqn6}
d\left(\sum_{n=0}^{\infty}|a_nz^n|, |f(0)|\right) = \sum_{n=1}^{\infty}|a_nz^n|\leq d(f(0), \partial \Omega),
\end{equation*}
for $|z|=r \leq r^*$. Let the class $\mathcal{G}$  consisting of analytic functions $f(z)=\sum_{n=0}^{\infty}a_nz^n$ which map disc $\mathbb{D}$ into a domain $\Omega$. Then Bohr radius for the class $\mathcal{G}$ satisfies the inequality
\[d\left(\sum_{n=0}^{\infty}|a_nz^n|, |f(0)|\right) = \sum_{n=1}^{\infty}|a_nz^n|\leq d(f(0), \partial f(\mathbb{D}))\] for all  $|z|=r \leq r^*$. In this case, the class $\mathcal{G}$ is said to satisfy a Bohr phenomenon.

Ali et al. \cite{Alex}
obtained the Bohr radius for wedge domain  $W_{\alpha} = \{w :| \arg w| < \pi\alpha/2\}, 1\leq \alpha \leq 2$ and also determined the upper  and lower bounds on the Bohr radius for odd analytic functions. Further, several different improved versions of the classical Bohr inequality were given in \cite{Ponnu5}. Alkhaleefah \emph{et al.} \cite{Ponnu1} established the classical Bohr inequality for the class of quasisubordinate functions, while the classical Bohr inequality in the Poincar\'{e} disc model of the hyperbolic plane was extended in \cite{Ne1}.  Bohr radii for  the classes of convex  univalent functions of order $\alpha$, close-to-convex functions and functions with positive real part were obtained by authors \cite{ali1}.
Powered Bohr radius for the class of analytic functions mapping the unit disc onto itself was studied in \cite{Ponnu3}. For more details, see \cite{pon1}.


In this paper, Section \ref{sec3} provides the   Bohr inequality for the class of Janowski starlike functions. In Section \ref{sec4}, the Bohr radius problem is determined for the class of analytic functions $f$ satisfying second order differential subordination relation $f(z)+\beta z f'(z)+\gamma z^2 f''(z)\prec h(z)$, where $h$ is a Janowski  function.
Section \ref{sec5} yields the Bohr radius for the class of alpha-convex functions. In the last section,  we compute the Bohr radius for the class of typically-real function.

\section{Janowski Starlike Functions}\label{sec3}
In the present section, we obtain the Bohr's radius for the class of Janowski starlike functions. To prove our results we need the following lemmas.

Letting $p=1$ and $\a = 0$ in \cite[Theorem 3, p.\ 738]{aou},  we get following result for Janowski starlike functions.
\begin{lemma}\label{lem1}
If $f(z) = z + \sum_{n=2}^{\infty}a_n z^n \in\mathcal{ST}[A,B]$, then
\begin{equation}\label{cv1}
 | a_n| \leq \prod_{k=0}^{n-2}\frac{|(B-A)+B k|}{k+1}, (n\geq 2)
\end{equation}
and these bounds are sharp.
\end{lemma}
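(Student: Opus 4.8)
The plan is to translate membership in $\mathcal{ST}[A,B]$ into a single functional equation and then extract the coefficient estimate from the Schwarz bound $|w|\le 1$. Since $zf'(z)/f(z)\prec (1+Az)/(1+Bz)$, there is a Schwarz function $w$ (analytic, $w(0)=0$, $|w(z)|<1$) with $zf'(z)/f(z)=(1+Aw(z))/(1+Bw(z))$. Clearing denominators and multiplying through by $f$, this is equivalent to
\begin{equation*}
zf'(z)-f(z)=w(z)\bigl(Af(z)-Bzf'(z)\bigr),
\end{equation*}
which is the identity I would work with. I would also record the power-series form: writing $p(z)=zf'(z)/f(z)=1+\sum_{n\ge1}p_nz^n$, the relation $zf'=fp$ gives the recursion $(n-1)a_n=\sum_{k=1}^{n-1}p_{n-k}a_k$ (with $a_1=1$), which already yields the base case $|a_2|=|p_1|\le A-B$.

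First I would observe that the crude estimate $|p_n|\le A-B$ fed into this recursion is \emph{not} strong enough to reach the stated product (it overshoots already in examples such as $A=1,\,B=0$), so the full force of $|w|\le1$ must be used. To that end I would compare Taylor coefficients in the displayed functional equation and apply Clunie's lemma to $\Phi=zf'-f=\sum_{n\ge2}(n-1)a_nz^n$ and $\Psi=Af-Bzf'=(A-B)z+\sum_{n\ge2}(A-Bn)a_nz^n$, where $\Phi=w\Psi$. This produces the sharp recursive inequality
\begin{equation*}
\sum_{n=2}^{N}(n-1)^2|a_n|^2\le (A-B)^2+\sum_{n=2}^{N-1}(A-Bn)^2|a_n|^2,
\end{equation*}
and hence, isolating the top term,
\begin{equation*}
(N-1)^2|a_N|^2\le (A-B)^2+\sum_{n=2}^{N-1}\bigl[(A-Bn)^2-(n-1)^2\bigr]|a_n|^2 .
\end{equation*}

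Writing $c_n=\prod_{k=0}^{n-2}|(B-A)+Bk|/(k+1)=\tfrac{1}{(n-1)!}\prod_{j=1}^{n-1}|A-Bj|$, I would prove $|a_n|\le c_n$ by strong induction. The extremal function below saturates every Clunie inequality (its Schwarz function is $w(z)=z$, which merely shifts coefficients), so the numbers $c_n$ satisfy the displayed bound with equality; substituting $|a_n|\le c_n$ for $n<N$ then closes the induction \emph{provided} every bracket $(A-Bn)^2-(n-1)^2$ is nonnegative, i.e.\ $|A-Bn|\ge n-1$. I expect this nonnegativity to be the main obstacle: when $|A-Bn|<n-1$ for some $n$ (which does occur in the admissible range $-1\le B<A\le1$), the corresponding terms carry the wrong sign and a monotone induction breaks down. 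Handling this requires treating the family of inequalities as a single constrained extremal problem in $(|a_2|,\dots,|a_N|)$ and showing its maximum is attained at $|a_n|=c_n$; this constrained optimization is the technical heart of the argument.

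Finally, for sharpness I would solve $zk'(z)/k(z)=(1+Az)/(1+Bz)$ explicitly, obtaining $k(z)=z(1+Bz)^{(A-B)/B}$ for $B\ne0$ (and $k(z)=ze^{Az}$ for $B=0$). Expanding the binomial series gives the $n$th coefficient $\tfrac{1}{(n-1)!}\prod_{j=1}^{n-1}\bigl(A-Bj\bigr)$, whose modulus is precisely $c_n$; since $k\in\mathcal{ST}[A,B]$, the bound \eqref{cv1} cannot be improved.
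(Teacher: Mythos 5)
Your framework is the standard one for this lemma and, as far as it goes, it is executed correctly: the identity $zf'(z)-f(z)=w(z)\bigl(Af(z)-Bzf'(z)\bigr)$, the Clunie inequality $\sum_{n=2}^{N}(n-1)^2|a_n|^2\le (A-B)^2+\sum_{n=2}^{N-1}(A-Bn)^2|a_n|^2$, the observation that $w(z)=z$ turns each of these inequalities into an equality for the numbers $c_n$, and the sharpness computation from $z(1+Bz)^{(A-B)/B}$ (resp.\ $ze^{Az}$) are all accurate. Note that the paper offers no proof to compare against: it obtains the lemma by setting $p=1$, $\alpha=0$ in Theorem~3 of Aouf's paper, whose proof is exactly the Clunie-plus-induction argument you outline. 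The problem is the step you yourself flag as ``the technical heart'' and then leave undone: closing the induction when some bracket $(A-Bn)^2-(n-1)^2$ is negative. This is a genuine gap, and moreover it cannot be filled, because in that parameter regime the stated bound is false. Concretely, take $A=1$, $B=0$, so the claimed bound is $|a_n|\le 1/(n-1)!$. The function $f(z)=z\exp(z^3/3)$ satisfies $zf'(z)/f(z)=1+z^3\prec 1+z$, hence $f\in\mathcal{ST}[1,0]$, yet $a_4=\tfrac13>\tfrac16$. Similarly, for $B=0$ and $0<A<\sqrt3-1$, the function $f(z)=z+\tfrac{A}{2+A}z^3$ lies in $\mathcal{ST}[A,0]$ (its Schwarz function is $w(z)=2\lambda z^2/\bigl(A(1+\lambda z^2)\bigr)$ with $\lambda=A/(2+A)$) and has $a_3=\tfrac{A}{2+A}>\tfrac{A^2}{2}$. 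So no constrained-optimization argument can rescue the general case: when $|A-Bn|<n-1$ the extremal behaviour is governed by Schwarz functions of the form $w(z)=z^{n-1}$ rather than $w(z)=z$, and the product formula is simply not the sharp bound. The defect is in the statement (and in the cited source, in this generality), not merely in your method.

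What your argument does prove, and what you should state, is the lemma under the additional hypothesis that $|A-Bn|\ge n-1$ for $2\le n\le N-1$: then substituting the inductive bounds $|a_n|\le c_n$ into the Clunie inequality and invoking your equality case gives
\begin{equation*}
(N-1)^2|a_N|^2\le (A-B)^2+\sum_{n=2}^{N-1}\bigl[(A-Bn)^2-(n-1)^2\bigr]c_n^2=(N-1)^2c_N^2,
\end{equation*}
which closes the induction. Since $|A-Bn|\sim |B|n$ as $n\to\infty$, this hypothesis holds for every $n\ge 2$ precisely when $B=-1$; thus the lemma (and your proof) is sound for the classes $\mathcal{ST}(\alpha)=\mathcal{ST}[1-2\alpha,-1]$ used in the paper's first corollaries, but it degrades for $B\in(-1,1)$, failing already at $n=3$ in the example above. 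A complete write-up should therefore either restrict the parameters so that the nonnegativity condition holds, or replace the product bound by the correct case analysis.
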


\begin{lemma}\label{lem2}\cite[Theorem 4, p.\ 315]{jan1}
If the function $f \in \mathcal{ST}[A,B]$, then for $|z| = r$; $(0 \le r <1)$,
\begin{equation}\label{cv2}
l_{(-A, -B)}(r)\leq\: \vert f(re^{i\theta})| \:\leq\:l_{(A, B)}(r)
\end{equation}
where $l_{(A, B)}:\mathbb{D}\rightarrow\mathbb{C}$ is given by
\begin{equation}\label{eq 1}
l_{(A, B)}(z)=\left\{ \begin{array}{ll}
                z(1+Bz)^{\frac{A-B}{B}}, & B\neq0; \\
                   ze^{Az}, & B=0.
                             \end{array}
                       \right.
\end{equation}
 The result is sharp.
\end{lemma}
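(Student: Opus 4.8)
The plan is to reduce the two-sided modulus estimate to a one-variable integration of the logarithmic derivative along a radius. Since $f \in \mathcal{ST}[A,B]$ means $zf'(z)/f(z) \prec (1+Az)/(1+Bz)$, I would first record the Schwarz-function representation $zf'(z)/f(z) = (1 + Aw(z))/(1 + Bw(z))$ for some analytic $w\colon \D \to \D$ with $w(0) = 0$, so that the Schwarz lemma yields $|w(z)| \le |z|$. The normalization $f(0)=0$, $f'(0)=1$ makes $\log(f(z)/z)$ analytic near the origin with value $0$ at $z=0$. Using the radial identity $\frac{\partial}{\partial\rho}\log|f(\rho\E)| = \frac{1}{\rho}\RE\left(\frac{zf'(z)}{f(z)}\right)$ with $z=\rho\E$, subtracting $1/\rho$, and integrating along the segment $z=\rho\E$, $0\le\rho\le r$, I obtain $\log\frac{|f(r\E)|}{r} = \int_0^r \frac{1}{\rho}\RE\left(\frac{(A-B)w(\rho\E)}{1+Bw(\rho\E)}\right)d\rho$, where I have used $\frac{1+Aw}{1+Bw}-1 = \frac{(A-B)w}{1+Bw}$.

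The core estimate is to bound, for each fixed $\rho$, the integrand by the extreme values of $\RE\frac{(A-B)s}{1+Bs}$ as $s$ ranges over the disc $|s|\le\rho$, which is legitimate because $|w(\rho\E)|\le\rho$. Since $A$ and $B$ are real, the Möbius map $s\mapsto (A-B)s/(1+Bs)$ carries the circle $|s|=\rho$ to a circle symmetric about the real axis, so its extreme real parts are attained at the real points $s=\pm\rho$, giving maximum $(A-B)\rho/(1+B\rho)$ and minimum $-(A-B)\rho/(1-B\rho)$. Substituting these two bounds and integrating elementarily produces $\log(1+Br)^{(A-B)/B}$ for the upper estimate and $\log(1-Br)^{(A-B)/B}$ for the lower one (with the limiting forms $\pm Ar$ when $B=0$). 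Exponentiating recovers precisely $|f(r\E)| \le r(1+Br)^{(A-B)/B} = l_{(A,B)}(r)$ and $|f(r\E)| \ge r(1-Br)^{(A-B)/B} = l_{(-A,-B)}(r)$, matching \eqref{cv2} and \eqref{eq 1}.

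For sharpness I would invoke the extremal functions directly: $l_{(A,B)}$ satisfies $z\,l_{(A,B)}'/l_{(A,B)} = (1+Az)/(1+Bz)$ (the choice $w(z)=z$), so equality holds at $z=r$ in the upper bound, while $l_{(-A,-B)}$ (the choice $w(z)=-z$) realizes the lower bound. The hard part will be the extremal computation of the middle paragraph: one must justify carefully that over $|s|\le\rho$ the real part of the Möbius image is extremized at the two real boundary points, confirm that $1\pm B\rho>0$ throughout $0\le\rho<1$ (which follows from $-1\le B<A\le 1$) so that the integrals and the fractional power $(1+Bz)^{(A-B)/B}$ are well defined on the principal branch, and treat the degenerate case $B=0$ separately, where the logarithmic integral collapses to $\pm Ar$ and the bounds become $re^{\pm Ar}$.
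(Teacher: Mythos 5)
Your proposal is correct, but note that the paper itself offers no proof of this statement: Lemma \ref{lem2} is simply quoted from Janowski's 1973 paper (Theorem 4, p.~315), so there is nothing internal to compare against. Your argument is the classical self-contained derivation of the Janowski growth theorem: write $zf'(z)/f(z)=(1+Aw(z))/(1+Bw(z))$ with a Schwarz function $w$, integrate $\frac{\partial}{\partial\rho}\log\bigl(|f(\rho e^{i\theta})|/\rho\bigr)$ radially, and bound the integrand by the extreme real parts of the M\"obius image of the disc $|s|\le\rho$. All the key steps check out: the pole $-1/B$ lies outside $|s|\le\rho$ since $|B|\le 1$, so the image of that disc is a disc symmetric about $\mathbb{R}$ whose extreme real parts are indeed $(A-B)\rho/(1+B\rho)$ at $s=\rho$ and $-(A-B)\rho/(1-B\rho)$ at $s=-\rho$; the integrals produce $\frac{A-B}{B}\log(1\pm Br)$ (or $\pm Ar$ when $B=0$), which exponentiate exactly to $l_{(A,B)}(r)$ and $l_{(-A,-B)}(r)$ under the paper's convention \eqref{eq 1}; and the extremal functions you name do satisfy $zl'_{(A,B)}/l_{(A,B)}=(1+Az)/(1+Bz)$ and $zl'_{(-A,-B)}/l_{(-A,-B)}=(1-Az)/(1-Bz)$, corresponding to $w(z)=z$ and $w(z)=-z$, giving equality at $z=r$. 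One point worth making explicit when you write it up: the radial integration needs $f(\rho e^{i\theta})\neq 0$ for $\rho>0$, which follows because $\RE\bigl(zf'(z)/f(z)\bigr)>0$ (the M\"obius map $(1+Az)/(1+Bz)$ sends $\D$ into the right half-plane for $-1\le B<A\le 1$), so $f$ is starlike and vanishes only at the origin.
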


\begin{theorem}\label{th4}
Let $f(z) = z + \sum_{n=2}^{\infty}a_n z^n$ be in the class $\mathcal{ST}[A,B]$. Then
$$\vert z \vert + \sum_{n=2}^{\infty}\vert a_n z^n \vert \leq d(0,\partial f(\mathbb{D})) $$ \text{for} $\vert z \vert < r^*$,
where $r^*\in(0,1]$ is the root of equations
\begin{align*}
r+\sum_{n=2}^{\infty}\prod_{k=0}^{n-2}\frac{|(B-A)+B k|}{k+1}r^n-(1-B )^{(A-B)/B}&=0,\quad \text{if}\quad B\neq0 \\
r e^{A r}-e^{-A}&=0,\quad \text{if}\quad B=0.
\end{align*}
The number $r^*$ is the Bohr radius for the class $\mathcal{ST}[A,B]$ which is best possible.
\end{theorem}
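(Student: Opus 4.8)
The plan is to bound the two sides of the asserted inequality separately and then compare them through an auxiliary function whose unique zero is $r^*$. Writing $r=|z|$ and noting $a_0=f(0)=0$, $a_1=1$, the left-hand side is $r+\sum_{n=2}^\infty|a_n|r^n$. I would first apply the coefficient estimates of Lemma~\ref{lem1} to obtain
\[
r+\sum_{n=2}^\infty|a_n|r^n\ \le\ r+\sum_{n=2}^\infty\prod_{k=0}^{n-2}\frac{|(B-A)+Bk|}{k+1}\,r^n,
\]
which in the case $B=0$ simplifies, since $\prod_{k=0}^{n-2}\frac{A}{k+1}=A^{\,n-1}/(n-1)!$, to the closed form $r\,e^{Ar}$.

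Next I would bound below the distance $d(0,\partial f(\mathbb{D}))$. Because $f(0)=0$ and $f$ is starlike, this distance equals $\inf_{|\zeta|=1}|f^*(\zeta)|$, the limiting minimum modulus on circles $|z|=r$ as $r\to1$. The lower distortion bound of Lemma~\ref{lem2} gives $|f(re^{i\theta})|\ge l_{(-A,-B)}(r)$, and since $l_{(-A,-B)}(z)=z(1-Bz)^{(A-B)/B}$ for $B\ne0$, letting $r\to1$ yields
\[
d(0,\partial f(\mathbb{D}))\ \ge\ (1-B)^{(A-B)/B}
\]
(and the corresponding value $e^{-A}$ when $B=0$).

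Combining the two estimates, set
\[
\Phi(r)=r+\sum_{n=2}^\infty\prod_{k=0}^{n-2}\frac{|(B-A)+Bk|}{k+1}\,r^n-(1-B)^{(A-B)/B}
\]
(with the analogue $\Phi(r)=re^{Ar}-e^{-A}$ when $B=0$). Then $\Phi(0)=-(1-B)^{(A-B)/B}<0$, each term of the series is increasing in $r$ so $\Phi$ is strictly increasing, and since the omitted constant satisfies $(1-B)^{(A-B)/B}<1$ one has $\lim_{r\to1^-}\Phi(r)\ge 1-(1-B)^{(A-B)/B}>0$. Hence $\Phi$ has a unique zero $r^*\in(0,1)$, and for every $r<r^*$ the inequality $\Phi(r)<0$ combines with the two displayed bounds to give $r+\sum_{n\ge2}|a_n|r^n\le d(0,\partial f(\mathbb{D}))$, the required Bohr inequality.

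For sharpness I would test the generalized Koebe function $f=l_{(A,B)}$, which lies in $\mathcal{ST}[A,B]$ since $zf'(z)/f(z)=(1+Az)/(1+Bz)$ and attains equality in Lemma~\ref{lem1}; thus its Bohr sum equals $\Phi(r)+(1-B)^{(A-B)/B}$. A direct estimate of $|l_{(A,B)}(re^{i\theta})|=r\,|1+Bre^{i\theta}|^{(A-B)/B}$ shows that, whatever the sign of the exponent, the minimum modulus on $|z|=r$ is attained at $z=-r$ and equals $l_{(-A,-B)}(r)$, so $d(0,\partial l_{(A,B)}(\mathbb{D}))=(1-B)^{(A-B)/B}$. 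Consequently equality holds at $r=r^*$ and the inequality fails for $r>r^*$, proving $r^*$ is best possible. The step I expect to be the main obstacle is exactly this coincidence: one must confirm that a single extremal function simultaneously saturates the coefficient bound (maximizing the left side) and realizes the minimal boundary distance (minimizing the right side), which rests on the sharpness of Lemma~\ref{lem2} at the argument $z=-r$ and on a careful sign analysis of the exponent $(A-B)/B$ over the ranges $-1\le B<0$ and $0<B\le 1$.
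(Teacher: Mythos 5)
Your proposal is correct and follows essentially the same route as the paper: bound the majorant series via the coefficient estimates of Lemma~\ref{lem1}, bound $d(0,\partial f(\mathbb{D}))$ from below by $l_{(-A,-B)}(1)$ via the growth theorem of Lemma~\ref{lem2}, and establish sharpness with the extremal function $l_{(A,B)}$. In fact you supply details the paper glosses over (existence and uniqueness of the root $r^*$ via monotonicity of $\Phi$, and the verification that $l_{(A,B)}$ simultaneously saturates the coefficient bounds and has boundary distance exactly $(1-B)^{(A-B)/B}$), but the underlying argument is the same.
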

\begin{proof}
Since $ f(z) = z + \sum_{n=2}^{\infty} a_n z^n$ belong to $\mathcal{ST}[A, B]$, then
using  inequality \eqref{cv2},  the distance between boundary and the origin of the function $f$ is given by
\begin{equation}\label{cv4}
d(0,\partial f(\mathbb{D})) = \inf_{\xi\in\partial f(\D)} |f(\xi)| =l_{(-A, -B)}(1).
\end{equation}
Note that the given $r^*$ satisfies the following equations:

For $B\neq0$,
\[r+\sum_{n=2}^{\infty}\prod_{k=0}^{n-2}\frac{|(B-A)+B k|}{k+1}r^n=(1-B )^{(A-B)/B}\]

and $B=0$,
\[r e^{A r}=e^{-A}.\]
Using  \eqref{cv1}, \eqref{cv4} and the fact that $A>0$ for $B=0$, we have
 \begin{align*}\label{cv6}
 \nonumber \vert z \vert + \sum_{n=2}^{\infty}\vert a_n z^n \vert &\leq r+\sum_{n=2}^{\infty}\prod_{k=0}^{n-2}\frac{|(B-A)+B k|}{k+1}r^n\\
 &=\left\{
  \begin{array}{ll}
   r+\sum_{n=2}^{\infty}\prod_{k=0}^{n-2}\frac{|(B-A)+B k|}{k+1}r^n,& B\neq0;\\
    r e^{A r},& B=0
  \end{array}
\right.
 \\
&\leq l_{(-A, -B)}(1)\\
&=d(0,\partial f(\mathbb{D}))
\end{align*}
for $r\leq r^*$. To show the sharpness of the Bohr radius $ r^* $, consider the function $ l_{(A, B)}$ so that
for $|z|=r^*$, we have
 \begin{align*}\vert z \vert + \sum_{n=2}^{\infty}\vert a_n z^n \vert & =  r^*+\sum_{n=2}^{\infty}\prod_{k=0}^{n-2}\frac{|(B-A)+B k|}{k+1}(r^*)^n \\
 & = \begin{cases}r^*+\sum_{n=2}^{\infty}\prod_{k=0}^{n-2}\frac{|(B-A)+B k|}{k+1}(r^*)^n , B\neq0\\
 r^* e^{A r^*}, \quad B=0
\end{cases}\\
&= l_{(-A, -B)}(1)\\
&=d(0,\partial f(\mathbb{D})).
\end{align*}
 This completes the sharpness.
\end{proof}
Bhowmik and Das \cite[Theorem 3, p. 1093]{bho}  found the  Bohr radius for $S^*(\alpha)$ where $\alpha\in[0, 1/2]$.
For $0\leq\alpha<1$, $A=1-2\alpha$ and $B=-1$,  Theorem \ref{th4} gives sharp Bohr radius for the class of starlike functions of order $\alpha$.
\begin{corollary}\cite[Remark 3, p. 7]{AHU}\label{cor4} If $0\leq\alpha<1$ and $f(z)=z+\sum_{n=2}^{\infty}a_nz^n \in \mathcal{ST}(\alpha)$, then
\[\vert z \vert + \sum_{n=2}^{\infty}\vert a_n z^n \vert \leq d(0,\partial f(\mathbb{D})) \] for $\vert z \vert < r^*$, where $r^*\in(0,1]$ is the root of equation $ (1 - r)^{2(1 - \alpha)} - 2^{2(1 - \alpha)}r \geq 0.$
\end{corollary}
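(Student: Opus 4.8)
The plan is to obtain Corollary~\ref{cor4} as the direct specialization of Theorem~\ref{th4} to the parameter choice $A=1-2\alpha$ and $B=-1$, which is exactly the pair defining $\mathcal{ST}(\alpha)$. Since $B\neq 0$ here, the governing equation from Theorem~\ref{th4} is $r+\sum_{n=2}^{\infty}\prod_{k=0}^{n-2}\frac{|(B-A)+Bk|}{k+1}r^{n}-(1-B)^{(A-B)/B}=0$, and the whole argument reduces to evaluating its three ingredients in closed form and rearranging. First I would insert the parameter values: then $B-A=-2(1-\alpha)$ and $B=-1$, so $|(B-A)+Bk|=2(1-\alpha)+k$ (the absolute value is harmless because $2(1-\alpha)>0$ and $k\ge 0$), and reindexing the product gives the classical sharp coefficient bound $\prod_{j=2}^{n}\frac{j-2\alpha}{j-1}$ for starlike functions of order $\alpha$.

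Next I would recognize these as the Taylor coefficients of the Ma--Minda extremal function furnished by Lemma~\ref{lem2}. With $A=1-2\alpha$ and $B=-1$, the function $l_{(A,B)}$ of \eqref{eq 1} is $l_{(1-2\alpha,-1)}(z)=z(1-z)^{-2(1-\alpha)}=z/(1-z)^{2(1-\alpha)}$, whose generalized binomial expansion has precisely the nonnegative coefficients computed above. Consequently, for $0<r<1$ the left-hand side of the governing equation collapses to $r+\sum_{n=2}^{\infty}\bigl(\prod_{j=2}^{n}\tfrac{j-2\alpha}{j-1}\bigr)r^{n}=r/(1-r)^{2(1-\alpha)}$. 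For the constant term I would use \eqref{cv4}: here $d(0,\partial f(\mathbb{D}))=l_{(-A,-B)}(1)=l_{(2\alpha-1,1)}(1)=1/(1+1)^{2(1-\alpha)}=2^{-2(1-\alpha)}$.

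Substituting these evaluations, the governing equation becomes $r/(1-r)^{2(1-\alpha)}=2^{-2(1-\alpha)}$; clearing denominators and multiplying through by $2^{2(1-\alpha)}$ turns this into $(1-r)^{2(1-\alpha)}-2^{2(1-\alpha)}r=0$, which is the equation in the statement. The Bohr radius $r^{*}$ is then the smallest positive root, equivalently the largest $r\in(0,1]$ for which $(1-r)^{2(1-\alpha)}-2^{2(1-\alpha)}r\ge 0$. The computation is essentially bookkeeping, so I expect no genuine analytic obstacle; the one identity that deserves care is confirming that the product of Lemma~\ref{lem1} reproduces, term by term, the binomial coefficients of $z/(1-z)^{2(1-\alpha)}$, since it is precisely this that allows the infinite series to be summed in closed form. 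A minor secondary check is that $t\mapsto(1-t)^{2(1-\alpha)}-2^{2(1-\alpha)}t$ is strictly decreasing on $[0,1]$, positive at $0$ and negative at $1$, so the root $r^{*}$ is unique and lies in $(0,1)$.
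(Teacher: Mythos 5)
Your proposal is correct and follows exactly the paper's route: the corollary is obtained by specializing Theorem~\ref{th4} to $A=1-2\alpha$, $B=-1$, and your computations (the coefficient product reindexing to $\prod_{j=2}^{n}\frac{j-2\alpha}{j-1}$, the closed-form sum $r/(1-r)^{2(1-\alpha)}$, and $l_{(-A,-B)}(1)=2^{-2(1-\alpha)}$) are precisely the bookkeeping the paper leaves implicit. The only added value is that you spell out these evaluations and the uniqueness of the root, which the paper omits.
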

\begin{remark}
In particular, for $\alpha=0$, Corollary \ref{cor4} yields the sharp Bohr radius for the class of starlike functions which is $3-2 \sqrt{2}$.
\end{remark}
 Putting  $A=\beta$ and $B=-\beta$, where $0<\beta\leq1$ in  Theorem \ref{th4}, we get the  sharp Bohr radius for the class  $\mathcal{ST}^{(\beta)}$.
\begin{corollary}If $0<\beta\leq1$ and $f(z)=z+\sum_{n=2}^{\infty}a_nz^n\in \mathcal{ST}^{(\beta)}$, then
\[\vert z \vert + \sum_{n=2}^{\infty}\vert a_n z^n \vert \leq d(0,\partial f(\mathbb{D})) \] for $\vert z \vert < r^*$, where $r^*\in(0,1]$ is given as
\[r^*=-\frac{-1-4 \beta -\beta ^2+(1+\beta ) \sqrt{1+\beta  (6+\beta )}}{2 \beta ^2}.\]
\end{corollary}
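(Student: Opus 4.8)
The plan is to specialize Theorem~\ref{th4} to $A=\beta$ and $B=-\beta$ with $0<\beta\le 1$, so that $\mathcal{ST}^{(\beta)}=\mathcal{ST}[\beta,-\beta]$, and then to reduce the defining equation for $r^*$ to a quadratic whose relevant root is the stated closed form. Since $B=-\beta\neq 0$, only the $B\neq 0$ branch of Theorem~\ref{th4} is in play. First I would simplify the coefficient factor: with $A=\beta$ and $B=-\beta$ one has $(B-A)+Bk=-\beta(k+2)$, hence $|(B-A)+Bk|=\beta(k+2)$ because $\beta>0$, and the product telescopes,
\[
\prod_{k=0}^{n-2}\frac{|(B-A)+Bk|}{k+1}=\beta^{n-1}\prod_{k=0}^{n-2}\frac{k+2}{k+1}=n\,\beta^{n-1},
\]
so that $|a_n|\le n\beta^{n-1}$ for $n\ge 2$.

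Next I would sum both sides of the defining equation. Since $\beta r<1$ whenever $0<\beta\le 1$ and $0<r<1$, the series converges and collapses to a closed form,
\[
r+\sum_{n=2}^{\infty} n\beta^{n-1}r^n=\sum_{n=1}^{\infty} n\beta^{n-1}r^n=\frac{r}{(1-\beta r)^2}.
\]
For the right-hand side, $1-B=1+\beta$ and $(A-B)/B=-2$, so $(1-B)^{(A-B)/B}=(1+\beta)^{-2}$. Thus the equation of Theorem~\ref{th4} becomes $r/(1-\beta r)^2=1/(1+\beta)^2$.

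Clearing denominators and expanding then gives the quadratic
\[
\beta^2 r^2-(1+4\beta+\beta^2)\,r+1=0.
\]
The key algebraic observation is that its discriminant factors as $(1+4\beta+\beta^2)^2-4\beta^2=(1+\beta)^2(1+6\beta+\beta^2)$, which produces the two roots $\big((1+4\beta+\beta^2)\pm(1+\beta)\sqrt{1+\beta(6+\beta)}\big)/(2\beta^2)$. The only genuine obstacle is selecting the correct root: I would argue that the $+$ sign yields a value exceeding $1$ (hence outside $\mathbb{D}$ and irrelevant), while the $-$ sign yields exactly the asserted $r^*\in(0,1]$.

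A convenient consistency check is $\beta=1$, for which $\mathcal{ST}^{(1)}=\mathcal{ST}$; the formula then returns $r^*=3-2\sqrt{2}$, agreeing with the classical Bohr radius for starlike functions recorded in the Remark above. Finally, sharpness of $r^*$ is inherited directly from the sharpness already established in Theorem~\ref{th4}, applied with the extremal function $l_{(\beta,-\beta)}$, so no separate extremal argument is needed.
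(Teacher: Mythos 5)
Your proposal is correct and matches the paper's approach exactly: the paper obtains this corollary precisely by setting $A=\beta$, $B=-\beta$ in Theorem~\ref{th4}, and your computations (the product collapsing to $n\beta^{n-1}$, the series summing to $r/(1-\beta r)^2$, the right-hand side $(1+\beta)^{-2}$, and the selection of the smaller root of $\beta^2r^2-(1+4\beta+\beta^2)r+1=0$) correctly supply the algebra the paper leaves implicit. The consistency check at $\beta=1$ giving $3-2\sqrt{2}$ and the inheritance of sharpness from the extremal function $l_{(\beta,-\beta)}$ are likewise in accord with the paper.
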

 If  $0<\beta\leq1$, $A=\beta$ and $B=0$, then  Theorem \ref{th4} yields sharp Bohr radius for the class  $\mathcal{ST}_{(\beta)}$.

\begin{corollary}If $f(z)=z+\sum_{n=2}^{\infty}a_nz^n\in \mathcal{ST}_{(\beta)}$, then $\vert z \vert + \sum_{n=2}^{\infty}\vert a_n z^n \vert \leq d(0,\partial f(\mathbb{D}))$ for $\vert z \vert < r^*$, where $r^*\in(0,1]$ is the root of equation $r e^{\beta r}-e^{-\beta}=0.$
\end{corollary}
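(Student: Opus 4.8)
The plan is to obtain this statement as a direct specialization of Theorem \ref{th4}. First I would invoke the definition given in the introduction, namely $\mathcal{ST}_{(\beta)} := \mathcal{ST}[\beta, 0]$, so that the hypothesis $f \in \mathcal{ST}_{(\beta)}$ is precisely the hypothesis $f \in \mathcal{ST}[A,B]$ under the parameter choice $A = \beta$, $B = 0$. Because $B = 0$, the relevant branch of Theorem \ref{th4} is the exponential one, whose coefficient bound and boundary distance are both governed by the function $l_{(A,B)}(z) = ze^{Az}$ supplied by Lemma \ref{lem2}.

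Next I would substitute $A = \beta$, $B = 0$ into the $B = 0$ defining equation $r e^{Ar} - e^{-A} = 0$ of Theorem \ref{th4}, which immediately produces the stated equation $r e^{\beta r} - e^{-\beta} = 0$. In this regime the coefficient estimate \eqref{cv1} reduces to the bound used in the $B = 0$ line of the displayed inequality chain in the proof of Theorem \ref{th4}, and the distance $d(0, \partial f(\mathbb{D})) = l_{(-A, -B)}(1)$ from \eqref{cv4} evaluates to $e^{-\beta}$. Thus every ingredient of the inequality $|z| + \sum_{n=2}^{\infty}|a_n z^n| \le d(0, \partial f(\mathbb{D}))$ is inherited verbatim, and sharpness follows by taking the extremal function $l_{(\beta, 0)}(z) = z e^{\beta z}$, exactly as in Theorem \ref{th4}.

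The only point requiring independent verification is that the root $r^*$ is well defined and lies in $(0,1]$. Writing $g(r) = r e^{\beta r} - e^{-\beta}$, one checks $g(0) = -e^{-\beta} < 0$ and $g(1) = e^{\beta} - e^{-\beta} > 0$ for $0 < \beta \le 1$, so the intermediate value theorem yields a root in $(0,1)$; since $g'(r) = e^{\beta r}(1 + \beta r) > 0$ on $(0,1)$, this root is unique. I do not expect any genuine obstacle, as the entire analytic content is already contained in Theorem \ref{th4}. The only subtlety worth flagging is the role of the sign condition $A = \beta > 0$: it is precisely what forces $g(1) > 0$ and hence guarantees that $r^*$ does not escape the interval $(0,1]$.
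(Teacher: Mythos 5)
Your proposal is correct and follows exactly the paper's route: the paper obtains this corollary simply by setting $A=\beta$ and $B=0$ in Theorem \ref{th4}, which is precisely the specialization you carry out. Your additional verification that the root of $re^{\beta r}-e^{-\beta}=0$ exists, is unique, and lies in $(0,1)$ is a sound (and welcome) supplement that the paper leaves implicit.
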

Letting  $A=1$ and $B=(1-M)/M$ where $M>1/2$,  Theorem \ref{th4} provides following result for the class  $\mathcal{ST}(M)$.
\begin{corollary}If $M>1/2$ and $f(z)=z+\sum_{n=2}^{\infty}a_nz^n\in \mathcal{ST}(M)$,  then
\[\vert z \vert + \sum_{n=2}^{\infty}\vert a_n z^n \vert \leq d(0,\partial f(\mathbb{D})) \] for $\vert z \vert < r^*$, where $r^*\in(0,1]$ is the root of equation
\[ r+\sum_{n=2}^{\infty}\prod_{k=0}^{n-2}  \left(\frac{|(1-2M)+(1-M) k|}{M(k+1)}\right)r^n-\left(2-\frac{1}{M}\right)^{(1-2 M)/(-1+M)}=0.\]
\end{corollary}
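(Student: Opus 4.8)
The final statement is the specialization of Theorem~\ref{th4} to the Janowski parameters $A=1$ and $B=(1-M)/M$, which defines Janowski's class $\mathcal{ST}(M)$ \cite{jan2}. The plan is therefore not to re-run the Bohr argument from scratch but to substitute these parameters into the two ingredients of Theorem~\ref{th4}---the coefficient product of \eqref{cv1} and the boundary distance $l_{(-A,-B)}(1)=(1-B)^{(A-B)/B}$---and to check that the resulting root equation is exactly the one displayed in the corollary.

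First I would confirm admissibility. The hypothesis $M>1/2$ is precisely what makes $A=1$ and $B=(1-M)/M=1/M-1$ satisfy $-1\le B<A\le1$: one has $B\ge-1$ for every $M>0$, while $B<1$ is equivalent to $M>1/2$. Note that $B=0$ exactly when $M=1$; for $M\neq1$ we sit in the branch $B\neq0$ of Theorem~\ref{th4}, which is the branch recorded in the corollary. With admissibility in hand, Theorem~\ref{th4} applies verbatim: its proof bounds $d(0,\partial f(\mathbb{D}))$ below by $l_{(-A,-B)}(1)$ via Lemma~\ref{lem2} and majorizes the Bohr sum by the increasing series $r+\sum_{n=2}^{\infty}\prod_{k=0}^{n-2}\frac{|(B-A)+Bk|}{k+1}r^n$ via Lemma~\ref{lem1}, so here only the parameter substitution and simplification remain.

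The remaining work is purely algebraic. For the coefficient product, $A=1$ gives $(B-A)+Bk=[(1-2M)+(1-M)k]/M$, so each factor $|(B-A)+Bk|/(k+1)$ becomes $|(1-2M)+(1-M)k|/(M(k+1))$, matching the product in the corollary. For the distance term I would compute $1-B=2-1/M$ and $(A-B)/B=(2M-1)/(1-M)=(1-2M)/(-1+M)$, whence $l_{(-A,-B)}(1)=(1-B)^{(A-B)/B}=(2-1/M)^{(1-2M)/(-1+M)}$. Substituting both into the defining relation $r^*+\sum_{n=2}^{\infty}\prod_{k=0}^{n-2}\frac{|(B-A)+Bk|}{k+1}(r^*)^n=(1-B)^{(A-B)/B}$ reproduces the stated equation, and sharpness is inherited from Theorem~\ref{th4} through the extremal function $l_{(1,(1-M)/M)}$.

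Because the corollary is a direct specialization, I do not expect a genuinely new obstacle; the only points demanding care are the admissibility check (that $-1\le B<A\le1$, equivalently $M>1/2$) and the sign bookkeeping in rewriting $(A-B)/B$ as $(1-2M)/(-1+M)$. The one conceptual caveat is that the displayed $B\neq0$ branch tacitly excludes $M=1$ (there $B=0$, the exponent $(1-2M)/(-1+M)$ is undefined, and one must instead use the $B=0$ branch $re^{r}=e^{-1}$ of Theorem~\ref{th4}, at which $\mathcal{ST}(1)=\mathcal{ST}[1,0]$ coincides with MacGregor's class $\mathcal{ST}_{(1)}$).
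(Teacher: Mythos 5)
Your proposal is correct and takes essentially the same route as the paper: the paper obtains this corollary purely by substituting $A=1$ and $B=(1-M)/M$ into Theorem~\ref{th4}, exactly as you do, with no additional argument recorded. Your extra checks---that $M>1/2$ is equivalent to the admissibility condition $B<A$, and that $M=1$ (where $B=0$) formally falls outside the displayed branch and requires the equation $re^{r}=e^{-1}$ instead---go beyond what the paper states but are consistent with it and correctly handled.
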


\section{Second Order Differential Subordination Associated With Janowski  Functions} \label{sec4}

For $\beta \ge \gamma \ge 0$, we consider the class $\mathcal{R}(\beta, \gamma,h)$ which is defined by making use of subordination as
\[\mathcal{R}(\beta, \gamma,h) = \{f \in \mathcal{A}: f(z) + \beta zf'(z) +\gamma z^2 f''(z) \prec h(z), z\in \D\}\]
where $h$ is a Janowski  function. The class $\mathcal{R}(\beta, \gamma,h)$  can be seen as an extension to the class
\[\mathcal{R}(\beta, h) = \{f \in \mathcal{A}:  f'(z) +\beta z f''(z) \prec h(z), z\in \D\}.\]
Many variations of this class have been studied by various authors \cite{gao,HM,Yang}.

For two analytic functions $f(z) = \sum\limits_{n=0}^{\infty}a_nz^n$ and $g(z) = \sum\limits_{n=0}^{\infty}b_nz^n$, the Hadamard product (or convolution) is the function $f \ast g$, defined by
\[(f \ast g)(z) = \sum\limits_{n=0}^{\infty}a_nb_nz^n.\]
Consider the function $\phi_{\lambda}$, defined by
\[\phi_{\lambda}(z) = \int\limits_{0}^{1}\frac{dt}{1-zt^{\lambda}} = \sum\limits_{n=0}^{\infty} \dfrac{z^n}{1+\lambda n}.\]
For $\RE \lambda \ge 0$, the function $\phi_{\lambda}$ is convex in $\D$ \cite[Theorem 5, p.113]{RUS}.

For $\beta \ge \gamma \ge 0$, let $\nu+\mu = \beta - \gamma\,\, \text{and}\,\, \mu\nu = \gamma$
and
\begin{equation}\label{eq 2}
q(z) = \int\limits_{0}^{1} \int\limits_{0}^{1}h(zt^{\mu}s^{\nu})dt ds = (\phi_{\nu} \ast \phi_{\mu}) \ast h(z).
\end{equation}
Since $\phi_{\nu} \ast \phi_{\mu}$ is a convex function and $h \in \mathcal{ST}[A,B]$ so it follows from \cite[Theorem 5, p.167]{ma} that $ q \in \mathcal{ST}[A,B]$. The following theorem gives the sharp Bohr radius for the class $\mathcal{R}(\beta, \gamma,h)$.

\begin{theorem} \label{MT}
Let $f(z) = \sum\limits_{n=0}^{\infty}a_nz^n \in \mathcal{R}(\beta, \gamma,h)$ and $h$ be a Janowski starlike function. Then
\[\sum\limits_{n=1}^{\infty}|a_n z^n |\le d(h(0),\partial h(\mathbb{D}))\]
for all $|z| \le r^*$,  where $r^* \in(0,1]$ is root of the equations
\[\dfrac{r}{1+(\nu+\mu) + \nu \mu } +\sum\limits_{n=2}^{\infty}\dfrac{\prod\limits_{k=0}^{n-2}\dfrac{|(B-A)+ Bk|}{k+1}}{1+(\nu+\mu)n + \nu \mu n^2}r^n = l_{(-A,-B)}(1),\]
where
\[l_{(-A,-B)}(1)= \left\{ \begin{array}{ll}
                 (1-B)^{\frac{A-B}{B}}, & \mbox{$B \ne 0$}; \\
                    e^{-A} , & \mbox{$B=0$}
                              \end{array}
                              \right. \]
The result is sharp.
\end{theorem}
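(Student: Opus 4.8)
The plan is to reduce everything to coefficient estimates for the Janowski starlike majorant, exploiting that the differential operator acts diagonally on power series. First I would set $L[f](z) := f(z) + \beta z f'(z) + \gamma z^2 f''(z)$ and record that $L[z^n] = \bigl(1 + (\nu+\mu)n + \nu\mu\, n^2\bigr)z^n = (1+\mu n)(1+\nu n)z^n$, the last equality being exactly the substitution $\nu+\mu=\beta-\gamma$, $\nu\mu=\gamma$. Writing $\Lambda_n := (1+\mu n)(1+\nu n)$ and $g := L[f] = \sum_{n\ge1} b_n z^n$, this gives $b_n = \Lambda_n a_n$. Since $f\in\mathcal{R}(\beta,\gamma,h)$ means $g\prec h$ with $h(0)=0$, comparing constant terms forces $a_0=0$, and comparing the $n=1$ terms (using $h'(0)=1$ and the Schwarz lemma) gives $|b_1|\le 1$, i.e. $|a_1|\le 1/\Lambda_1$, which is precisely the first term of the target series.

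The heart of the argument is the coefficient control $|b_n|\le M_n$ for $n\ge 2$, where $M_n := \prod_{k=0}^{n-2}\frac{|(B-A)+Bk|}{k+1}$ is the sharp $\mathcal{ST}[A,B]$ coefficient bound supplied by Lemma~\ref{lem1}. Granting this, $|a_n| = |b_n|/\Lambda_n \le M_n/\Lambda_n$, and summation gives
\[ \sum_{n=1}^{\infty}|a_n z^n| \le \frac{r}{\Lambda_1} + \sum_{n=2}^{\infty}\frac{M_n}{\Lambda_n}r^n =: \Phi(r), \qquad |z|=r. \]
I expect this coefficient step to be the main obstacle, and I would not take it for granted: subordination $g\prec h$ does \emph{not} dominate Taylor coefficients term by term (already $g(z)=z^2\prec z$ has $|b_2|=1$, exceeding $M_2 = A-B$ when $A-B<1$), so the honest task is to control the \emph{Bohr sum} rather than the individual coefficients. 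The clean way to secure it is to prove the summed inequality $\sum_{n\ge1}|b_n|r^n/\Lambda_n \le \Phi(r)$ directly, writing $g = h\circ\omega$ for a Schwarz function $\omega$ and using the convexity of $\phi_\nu\ast\phi_\mu$ (the same input that already gave $q=(\phi_\nu\ast\phi_\mu)\ast h\in\mathcal{ST}[A,B]$); I would isolate this as a separate lemma and treat it as the technical core.

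Next I would dispatch the right-hand side. Since $h(0)=0$, the target distance is $d(h(0),\partial h(\mathbb{D})) = d(0,\partial h(\mathbb{D})) = \liminf_{|z|\to1}|h(z)|$, and Lemma~\ref{lem2} (the Janowski growth theorem) gives $|h(z)|\ge l_{(-A,-B)}(|z|)$, whence $d(0,\partial h(\mathbb{D}))\ge l_{(-A,-B)}(1)$, the constant displayed in the statement. Because $\Phi$ is strictly increasing with $\Phi(0)=0$, the equation $\Phi(r)=l_{(-A,-B)}(1)$ has at most one root in $(0,1]$, namely the $r^*$ of the statement, and $\Phi(r)\le l_{(-A,-B)}(1)$ for every $r\le r^*$. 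Chaining the two estimates yields $\sum_{n\ge1}|a_n z^n|\le\Phi(r)\le l_{(-A,-B)}(1)\le d(h(0),\partial h(\mathbb{D}))$ for $|z|=r\le r^*$, which is the asserted inequality.

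Finally, for sharpness I would take $h=l_{(A,B)}$, the extremal Janowski function, and $f=q=(\phi_\nu\ast\phi_\mu)\ast l_{(A,B)}$, so that $L[f]=h$ and $a_n = (l_{(A,B)})_n/\Lambda_n$ with $|(l_{(A,B)})_n|=M_n$. For this choice every inequality above collapses to equality: the coefficient bounds are attained, $d(0,\partial h(\mathbb{D}))=l_{(-A,-B)}(1)$ (realized in the direction $z=-1$), and at $r=r^*$ one obtains $\sum_{n\ge1}|a_n|(r^*)^n = \Phi(r^*) = l_{(-A,-B)}(1)$. Hence no radius exceeding $r^*$ can work, so $r^*$ is best possible and the result is sharp.
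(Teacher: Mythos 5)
Your diagnosis of the critical difficulty is exactly right, and it is in fact sharper than the paper's own argument: the paper's proof passes from $F\prec h$ to the display $\bigl|\tfrac{1+\beta n+\gamma n(n-1)}{h'(0)}\bigr||a_n|\le\prod_{k=0}^{n-2}\tfrac{|(B-A)+Bk|}{k+1}$ by applying Lemma \ref{lem1} to the \emph{subordinate} function $(F(z)-F(0))/h'(0)$ as though it were itself a member of $\mathcal{ST}[A,B]$, and, as your example $z^2\prec z\in\mathcal{ST}[A,B]$ shows, no such termwise bound exists. The trouble is that your proposal does not repair this step: the whole proof is made to rest on the ``separate lemma'' $\sum_{n\ge1}|b_n|r^n/\Lambda_n\le\Phi(r)$ for every $g=\sum b_nz^n\prec h$, which you announce as the technical core but never prove. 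The convexity of $\phi_\nu\ast\phi_\mu$ buys only the Ruscheweyh--Sheil-Small implication $f=(\phi_\nu\ast\phi_\mu)\ast g\prec(\phi_\nu\ast\phi_\mu)\ast h=q\in\mathcal{ST}[A,B]$, which merely restates the task (bound the Bohr sum of a function subordinate to a Janowski starlike function); and the one general tool of this kind, Lemma 1 of Bhowmik--Das \cite{bho}, gives $\sum_{n\ge1}|b_n|r^n\le\sum_{n\ge1}|h_n|r^n$ \emph{only for} $r\le1/3$, whereas the $r^*$ of the theorem can be arbitrarily close to $1$.

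Moreover, this gap cannot be filled, because the deferred lemma---and the theorem itself---is false in general. (Note first that the class must be read with $f$ unnormalized: if $f\in\mathcal{A}$ and $h'(0)=1$, Schwarz's lemma gives $1+\beta=|F'(0)|\le|h'(0)|=1$, so the class would be empty for $\beta>0$; the paper's own extremal function $q$, with $q'(0)=1/(1+\beta)$, confirms the unnormalized reading.) Take $\gamma=0$, $\beta=A=1/100$, $B=0$, so $\Lambda_n=1+n/100$, $M_n=A^{n-1}/(n-1)!$ and $l_{(-A,-B)}(1)=e^{-1/100}\approx0.9901$; the radius equation has its root $r^*\in(0.9,1)$, since $\Phi(0.9)\approx0.899<e^{-1/100}<\Phi(1)\approx0.9999$. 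Now let $h(z)=ze^{z/100}\in\mathcal{ST}[1/100,0]$, let $\omega(z)=z(0.8-z)/(1-0.8z)$, put $F=h\circ\omega\prec h$, and define $f$ by $a_n=F_n/\Lambda_n$, so that $f\in\mathcal{R}(1/100,0,h)$. At $r=0.9<r^*$ one has $\sum_{n\ge1}|\omega_n|r^n=0.72+(0.36)(0.81)/(0.28)>1.7$, and after inserting the weights $1/\Lambda_n$ and the $O(A)$ perturbation coming from $F=\omega e^{A\omega}$, still $\sum_{n\ge1}|a_n|r^n>3/2$, while $d(h(0),\partial h(\mathbb{D}))=e^{-1/100}<1$. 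So the asserted inequality fails well inside $[0,r^*]$: neither your argument nor the paper's can be completed, because the statement itself does not hold for all $-1\le B<A\le1$. It does survive when the classical bound $|b_n|\le n|h'(0)|$, valid for subordination beneath a univalent (in particular starlike) majorant, is at most $M_n$---e.g.\ $A=1$, $B=-1$, where $M_n=n$, which is the corollary quoted from \cite{ABU}---or if one restricts to $r\le1/3$, where the Bhowmik--Das lemma legitimizes the key inequality.
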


\begin{proof}
Let $F(z) = f(z) +\beta zf'(z) + \gamma z^2 f''(z) \prec h(z)$. It is noted that
\[F(z) = \sum\limits_{n=0}^{\infty}[1+\beta n+ \gamma n(n-1)]a_n z^n \prec h(z), \quad a_1=1.\]
Consider
\[\frac{1}{h'(0)}\sum\limits_{n=1}^{\infty}[1+\beta n+ \gamma n(n-1)]a_nz^n = \frac{F(z)-F(0)}{h'(0)} \prec \frac{h(z)-h(0)}{h'(0)}= H(z).\]
Since $h \in \mathcal{ST}[A,B]$, it follows that $H \in \mathcal{ST}[A,B]$. Thus, Lemma \ref{lem1} gives
\[\left|\dfrac{1+\beta n+ \gamma n(n-1)}{h'(0)}\right||a_n| \le \prod\limits_{k=0}^{n-2}\dfrac{|(B-A)+Bk|}{k+1}\]
for each $n \ge 2$.
In view of the above inequality, we have
\begin{equation}\label{an}
\sum\limits_{n=1}^{\infty}|a_n|r^n \le \dfrac{|h'(0)|}{1+(\nu +\mu)+ \mu \nu }r+ \sum\limits_{n=2}^{\infty}\dfrac{|h'(0)|\prod\limits_{k=0}^{n-2}\dfrac{|(B-A)+Bk|}{k+1}}{1+(\nu +\mu)n+ \mu \nu n^2}r^n.
\end{equation}
Since  $H \in \mathcal{ST}[A,B]$, using \eqref{eq 1} and \eqref{cv2} the following inequality holds
\begin{equation} \label{H(z)}
l_{(-A,-B)}(r)  \le |H(r \E)| \le l_{(A,B)}(r),\,\, 0 < r \le 1.
\end{equation}
So that
\[d(0, \partial H(\D)) \ge l_{(-A,-B)}(1)  \]
which gives
\begin{equation}\label{h(z)}
d(h(0), \partial h(\D)) = \inf_{\xi\in\partial h(\D)} |h(\xi) -h(0)| \ge |h'(0)|l_{(-A,-B)}(1).
\end{equation}

Using(\ref{an}) and (\ref{h(z)}) we obtain
\begin{equation*}
\sum\limits_{n=1}^{\infty}|a_n|r^n \le \dfrac{d(h(0), \partial h(\D))}{l_{(-A,-B)}(1)} \left(\dfrac{1}{1+(\nu +\mu)+ \mu \nu}r+ \sum\limits_{n=2}^{\infty}\dfrac{\prod\limits_{k=0}^{n-2}\dfrac{|
(B-A)+Bk|}{k+1}}{1+(\nu +\mu)n+ \mu \nu n^2}r^n\right).\\
 \end{equation*}
Thus, the  Bohr radius $r^*$ is the smallest postive root of the equation
 \[\dfrac{r}{1+(\nu+\mu) + \nu \mu} +\sum\limits_{n=2}^{\infty}\dfrac{\prod\limits_{k=0}^{n-2}\dfrac{|(B-A)+ Bk|}{k+1}}{1+(\nu+\mu)n + \nu \mu n^2}r^n = l_{(-A,-B)}(1). \]
For sharpness, consider the function $f(z) = q(z) = (\phi_{\nu} \ast \phi_{\mu}) \ast h(z)$ as defined in \eqref{eq 2},
where
\[h(z) = l_{(A,B)}(z) = \left\{ \begin{array}{ll}
                z(1+Bz)^{\frac{A-B}{B}}, & \mbox{$B \ne 0$}; \\
                   ze^{Az} , & \mbox{$B=0.$}
                             \end{array}
                       \right. \]
Also $f(z) \in \mathcal{R}(\beta,\gamma,h)$.
This gives
\begin{align*}
f(z) = \dfrac{z}{1+(\nu+\mu) + \nu \mu } +\sum\limits_{n=2}^{\infty}\dfrac{\prod\limits_{k=0}^{n-2}\dfrac{|(B-A)+ Bk|}{k+1}}{1+(\nu+\mu)n + \nu \mu n^2}z^n.
\end{align*}

For $|z| = r^*$,
\begin{align*}
\sum\limits_{n=1}^{\infty}|a_nz^n| &= |h'(0)|\left(\dfrac{r^*}{1+(\nu+\mu) + \nu \mu} +\sum\limits_{n=2}^{\infty}\dfrac{\prod\limits_{k=0}^{n-2}\dfrac{|(B-A)+ Bk|}{k+1}}{1+(\nu+\mu)n + \nu \mu n^2}(r^*)^n\right)\\
& = |h'(0)| l_{(-A,-B)}(1)\\
& = d(h(0), \partial h(\D)).
\end{align*}
Thus the result is sharp.
\end{proof}

For $0 \le \alpha <1, A = 1-2 \alpha$ and $B=-1$, Theorem \ref{MT} reduces to the following result.
\begin{corollary} \cite [Theorem 3.3, p.7] {NKJ}
Let $f(z) = \sum\limits_{n=0}^{\infty}a_nz^n \in \mathcal{R}(\beta, \gamma,h)$ and $h$ be starlike of order $\alpha$. Then
\[\sum\limits_{n=1}^{\infty}|a_n z^n| \le d(h(0),\partial h(\mathbb{D}))\]
for all $|z| \le r^*$,  where $r^* \in(0,1]$ is the smallest positive root of the equation
\[\dfrac{r}{1+(\nu+\mu) + \nu \mu} +\sum\limits_{n=2}^{\infty}\dfrac{\frac{1}{(n-1)!}\prod\limits_{k=2}^{n}(k-2 \alpha)}{1+(\nu+\mu)n + \nu \mu n^2}r^n = \dfrac{1}{2^{2(1-\alpha)}}.\]

The result is sharp.
\end{corollary}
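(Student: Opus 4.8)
The plan is to specialize Theorem~\ref{MT} to the parameter choice $A = 1-2\alpha$ and $B=-1$, which identifies $h$ as a starlike function of order $\alpha$ (recall that $\phi(z)=(1+(1-2\alpha)z)/(1-z)$ generates $\mathcal{ST}(\alpha)$), and then to reduce the two ingredients of the general statement to closed form: the Janowski coefficient product and the boundary distance $l_{(-A,-B)}(1)$. Since the subordination hypothesis, the coefficient estimate \eqref{an}, and the distance computation \eqref{h(z)} all transfer verbatim from the proof of Theorem~\ref{MT}, the only work needed is the algebraic simplification of these two quantities under the present values of $A$ and $B$.

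First I would evaluate the coefficient product. With $B-A = -1-(1-2\alpha) = -2(1-\alpha)$ and $B=-1$, one has $(B-A)+Bk = -2(1-\alpha)-k = -(k+2-2\alpha)$; since $k+2-2\alpha>0$ for every $k\ge 0$ when $0\le\alpha<1$, the absolute value is simply $k+2-2\alpha$. Hence
\[\prod_{k=0}^{n-2}\frac{|(B-A)+Bk|}{k+1} = \frac{\prod_{k=0}^{n-2}(k+2-2\alpha)}{\prod_{k=0}^{n-2}(k+1)} = \frac{1}{(n-1)!}\prod_{k=2}^{n}(k-2\alpha),\]
where the denominator telescopes to $(n-1)!$ and the numerator is reindexed by $j=k+2$ so that $j$ ranges over $2,\dots,n$. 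This is precisely the numerator appearing in the corollary.

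Next I would compute the boundary distance using the formula $l_{(-A,-B)}(1)=(1-B)^{(A-B)/B}$, which is the relevant branch since $B=-1\neq 0$. Substituting $1-B=2$, $A-B=(1-2\alpha)-(-1)=2(1-\alpha)$, and thus $(A-B)/B=-2(1-\alpha)$ gives
\[l_{(-A,-B)}(1) = 2^{-2(1-\alpha)} = \frac{1}{2^{2(1-\alpha)}}.\]
Inserting both reductions into the defining equation for $r^*$ in Theorem~\ref{MT} reproduces exactly the stated equation, and sharpness is inherited from the sharpness already established there, with the extremal function $f=q=(\phi_\nu\ast\phi_\mu)\ast h$ built from $h=l_{(A,B)}$ specialized to $A=1-2\alpha$, $B=-1$. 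The only point requiring care is the sign bookkeeping that removes the absolute value together with the shift of the product index; this is entirely routine, so no genuine obstacle arises and the statement follows as an immediate corollary.
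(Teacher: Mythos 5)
Your proposal is correct and matches the paper's approach exactly: the paper obtains this corollary precisely by setting $A=1-2\alpha$, $B=-1$ in Theorem~\ref{MT}, and your algebra (the product reindexing giving $\frac{1}{(n-1)!}\prod_{k=2}^{n}(k-2\alpha)$ and the evaluation $l_{(-A,-B)}(1)=2^{-2(1-\alpha)}$) correctly fills in the routine simplifications the paper leaves implicit. Sharpness is indeed inherited from the extremal function of Theorem~\ref{MT} specialized to these parameter values, as you state.
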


For $ A = 1$ and $B=-1$, Theorem \ref{MT} yields  the following result.
\begin{corollary}\cite [Theorem 3.3, p.131] {ABU}
Let $f(z) = \sum\limits_{n=0}^{\infty} a_n z^n \in \mathcal{R}(\beta, \gamma,h)$ and $h$ be starlike. Then
\[\sum\limits_{n=1}^{\infty}|a_n z^n| \le d(h(0),\partial h(\mathbb{D}))\]
for all $|z| \le r^*$,  where $r^* \in(0,1]$ is the smallest positive root of the equation
\[\sum\limits_{n=1}^{\infty}\dfrac{n}{1+(\nu+\mu)n + \nu \mu n^2}r^n = \dfrac{1}{4}.\]

The result is sharp.
\end{corollary}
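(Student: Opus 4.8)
The plan is to derive this corollary directly from Theorem \ref{MT} by specializing to $A=1$ and $B=-1$, since $\mathcal{ST}[1,-1]=\mathcal{ST}$ is exactly the class of normalized starlike functions. First I would substitute these parameters into the two quantities that enter the Bohr-radius equation of Theorem \ref{MT}, namely the Janowski coefficient product $\prod_{k=0}^{n-2}\frac{|(B-A)+Bk|}{k+1}$ and the boundary-distance value $l_{(-A,-B)}(1)$; everything else in Theorem \ref{MT}, namely the coefficient estimate for $F=f+\beta zf'+\gamma z^2f''$, the factorization $1+\beta n+\gamma n(n-1)=1+(\nu+\mu)n+\nu\mu n^2$, and the distance inequality, carries over unchanged and may be quoted verbatim.

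The first evaluation is a telescoping product. With $A=1$ and $B=-1$ one has $B-A=-2$, hence $(B-A)+Bk=-(k+2)$ and
\[\prod_{k=0}^{n-2}\frac{|(B-A)+Bk|}{k+1}=\prod_{k=0}^{n-2}\frac{k+2}{k+1}=\frac{2}{1}\cdot\frac{3}{2}\cdots\frac{n}{n-1}=n,\]
which recovers the classical sharp bound $|a_n|\le n$ for starlike functions. The second evaluation is the boundary distance: the closed form in Theorem \ref{MT} gives $l_{(-A,-B)}(1)=(1-B)^{(A-B)/B}=2^{-2}=\frac{1}{4}$, equivalently $l_{(-1,1)}(z)=z(1+z)^{-2}$ evaluated at $z=1$.

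Substituting these two values into the defining equation
\[\frac{r}{1+(\nu+\mu)+\nu\mu}+\sum_{n=2}^{\infty}\frac{\prod_{k=0}^{n-2}\frac{|(B-A)+Bk|}{k+1}}{1+(\nu+\mu)n+\nu\mu n^2}r^n=l_{(-A,-B)}(1)\]
turns the $n$-th summand into $\frac{n}{1+(\nu+\mu)n+\nu\mu n^2}r^n$ and the right-hand side into $\frac{1}{4}$. The last point is to fold the isolated linear term into the series: since the $n=1$ value of $\frac{n}{1+(\nu+\mu)n+\nu\mu n^2}$ equals $\frac{1}{1+(\nu+\mu)+\nu\mu}$, the term $\frac{r}{1+(\nu+\mu)+\nu\mu}$ is precisely the $n=1$ summand, so the equation collapses to
\[\sum_{n=1}^{\infty}\frac{n}{1+(\nu+\mu)n+\nu\mu n^2}r^n=\frac{1}{4},\]
with $r^*$ its smallest positive root. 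Sharpness is inherited from Theorem \ref{MT}: the extremal $h=l_{(1,-1)}$ is the Koebe function $z/(1-z)^2$, whose coefficients are $n$, and the associated $f=q=(\phi_{\nu}\ast\phi_{\mu})\ast h$ attains equality. Thus the only real work beyond invoking Theorem \ref{MT} is the telescoping computation and the elementary evaluation of $l_{(-A,-B)}(1)$, so I expect no genuine obstacle here.
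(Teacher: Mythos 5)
Your proposal is correct and matches the paper's own treatment: the paper obtains this corollary precisely by setting $A=1$, $B=-1$ in Theorem \ref{MT}, which is exactly your specialization. Your telescoping computation $\prod_{k=0}^{n-2}\frac{k+2}{k+1}=n$, the evaluation $l_{(-1,1)}(1)=(1+1)^{-2}=\frac{1}{4}$, the absorption of the linear term as the $n=1$ summand, and the identification of the extremal $h$ with the Koebe function are all the (routine) details the paper leaves implicit.
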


\section{Alpha-Convex Functions} \label{sec5}

In 1969, Mocanu \cite{MOCANU} introduced the class of $\alpha$-convex functions. For $\alpha\in\mathbb{R}$, a normalized analytic function $f(z) = z+ \sum\limits_{n=2}^{\infty}a_n z^n$ is said to be $\alpha$-convex in $\D$(or $\a$-convex) if the following conditions hold
\[\dfrac{f(z)}{z}f'(z) \ne 0,\]
and
\[\RE\left[\a \left( 1+\dfrac{zf''(z)}{f'(z)}\right) +(1-\a) \dfrac{zf'(z)}{f(z)} \right] \ge 0, \]
for all $z \in \D$. The set of all such functions is denoted by $\a-\mathcal{CV}$. For $\a = 0$ , $\a-\mathcal{CV}$ is the class of starlike functions and for $\a =1$, $\a-\mathcal{CV}$ is the class of convex functions.

In this section, we shall obtain the Bohr radius for the class of $\a$-convex functions. In order to obtain the Bohr radius for the class $\a-\mathcal{CV}$, we need the following lemmas.

\begin{lemma} \cite [Theorem 7, p. 146] {GOODMAN}\label{distCV}
If $\a >0$, and $f(z)$ is $\a$-convex, then
\[k(-r,\a) \le |f(z)| \le k(r,\a), \,\,z= r \E,\]
where
\begin{equation}\label{eq 3}
k(z,\a) = \left[\frac{1}{\a}\int\limits_{0}^{z}\frac{\xi^{\frac{1}{\a}-1}d \xi}{(1-\xi)^{\frac{2}{\a}}}\right]^\a.
 \end{equation}
 The inequalites are sharp for each $\a >0$ and each $r \in(0,1)$.
\end{lemma}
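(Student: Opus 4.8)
The plan is to reduce the growth problem for $\a$-convex functions to the classical growth theorem for starlike functions via a Mocanu-type integral representation. Writing $D=z\,d/dz$ for the logarithmic derivative operator, a direct computation gives $D\log(zf')=1+zf''/f'$ and $D\log f=zf'/f$, so the Mocanu functional $J(\a,f):=\a(1+zf''/f')+(1-\a)zf'/f$ equals $D\log[(zf')^{\a}f^{1-\a}]$. Hence, setting $g(z):=(zf'(z))^{\a}f(z)^{1-\a}$ (a well-defined normalized function, since the hypothesis $\tfrac{f(z)}{z}f'(z)\neq0$ fixes single-valued branches and $g(z)\sim z$ near the origin), one has $zg'(z)/g(z)=J(\a,f)(z)$, so $\RE(zg'/g)\ge 0$ and $g$ is starlike. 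Solving $(zf')^{\a}f^{1-\a}=g$ for $f$ gives $\tfrac{d}{dz}f^{1/\a}=\tfrac{1}{\a}g(z)^{1/\a}/z$, and integrating from the origin yields
\[ f(z)^{1/\a}=\frac{1}{\a}\int_0^z \frac{g(t)^{1/\a}}{t}\,dt. \]
Taking $g$ to be the Koebe function $t/(1-t)^2$ recovers exactly $k(z,\a)$, which identifies the extremal function.

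For the upper bound I would integrate this representation along the straight segment $t=\rho\E$, $0\le\rho\le r$, which is legitimate by analyticity. The triangle inequality together with $|g(t)^{1/\a}|=|g(t)|^{1/\a}$ and the sharp growth bound $|g(t)|\le |t|/(1-|t|)^2$ for starlike $g$ gives
\[ |f(z)^{1/\a}|\le \frac{1}{\a}\int_0^r \frac{|g(\rho\E)|^{1/\a}}{\rho}\,d\rho \le \frac{1}{\a}\int_0^r \frac{\rho^{1/\a-1}}{(1-\rho)^{2/\a}}\,d\rho = k(r,\a)^{1/\a}, \]
whence $|f(z)|=|f(z)^{1/\a}|^{\a}\le k(r,\a)$.

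The lower bound is the main obstacle, because the modulus of a complex integral is not controlled from below by the integral of the modulus. Here I would exploit that an $\a$-convex function with $\a>0$ is starlike (classical, via differential subordination), so $f(\D)$ is starlike about the origin. Let $z_0$ with $|z_0|=r$ realize $m:=\min_{|z|=r}|f(z)|$, and put $w_0=f(z_0)$. The radial segment $[0,w_0]$ lies in $f(\D)$, so its $f$-preimage is an arc $\Gamma$ from $0$ to $z_0$ along which $f(t)^{1/\a}$ keeps the constant argument $\arg w_0^{1/\a}$. Consequently the integrand $g(t)^{1/\a}t^{-1}\,dt=\a\,d(f^{1/\a})$ has a fixed direction along $\Gamma$, so the modulus of the integral equals the integral of the modulus:
\[ m^{1/\a}=\frac{1}{\a}\int_{\Gamma} \frac{|g(t)|^{1/\a}}{|t|}\,|dt|. \]
Applying the sharp lower bound $|g(t)|\ge |t|/(1+|t|)^2$ and then a level-set (coarea) argument — using $|d|t||\le|dt|$ and that $|t|$ sweeps all of $[0,r]$ along $\Gamma$ — reduces the curvilinear integral to $\tfrac1\a\int_0^r \rho^{1/\a-1}(1+\rho)^{-2/\a}\,d\rho$, which is exactly $|k(-r,\a)|$ (note that $k$ at $-r$ is a negative real/rotated quantity, so the intended statement reads $|k(-r,\a)|\le|f(z)|$). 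Raising to the power $\a$ gives $m\ge|k(-r,\a)|$.

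Finally, sharpness follows by taking $f=k(\cdot,\a)$, the function generated by the Koebe function: equality in the upper estimate holds at $z=r$ and in the lower estimate at $z=-r$, since the triangle inequality and the starlike growth bounds are simultaneously sharp for the Koebe function. The only points demanding care are the consistent choice of branches for the fractional powers (guaranteed by $\tfrac{f(z)}{z}f'(z)\ne0$) and the sign convention in $k(-r,\a)$; I expect the coarea step in the lower bound to be the genuinely delicate part of the argument.
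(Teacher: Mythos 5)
This lemma is imported by the paper verbatim from Goodman \cite[Theorem 7, p.~146]{GOODMAN}; the paper contains no proof of it, so there is no internal argument to compare yours against. What you have written is, in substance, the classical Mocanu-style proof underlying the cited result, and it is sound: the identity $zg'/g=\a(1+zf''/f')+(1-\a)zf'/f$ for $g=(zf')^{\a}f^{1-\a}$ is correct; the representation $f^{1/\a}=\frac{1}{\a}\int_0^z g(t)^{1/\a}t^{-1}\,dt$ follows by direct differentiation, with the integrand $\sim t^{1/\a-1}$ integrable at the origin because $\a>0$; the upper bound is a routine triangle-inequality estimate combined with the starlike growth theorem; and your lower-bound argument --- pulling back the radial segment $[0,f(z_0)]$, observing that $d(f^{1/\a})$ has constant direction along the preimage arc so that the modulus of the integral equals the integral of the modulus, then using $|dt|\ge |d|t||$ together with the fact that $|t|$ runs from $0$ to $r$ along the arc --- is precisely the delicate step, executed validly (for nonnegative $h$, $\int_\Gamma h(|t|)\,|d|t||\ \ge \int_\Gamma h(|t|)\,d|t| = \int_0^r h(\rho)\,d\rho$). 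Two points deserve explicit care. First, the lower bound leans on Mocanu's theorem that $\a$-convex functions with $\a>0$ are univalent and starlike; this is a genuine external ingredient (not circular with the growth theorem, since it is proved by differential-subordination methods), but it should be cited precisely rather than waved at. Second, the sign convention you flag is real: $k(-r,\a)$ as literally defined by \eqref{eq 3} is not a positive real number for general $\a$ (e.g.\ $k(-r,1)=-r/(1+r)$), so the stated lower bound must be read as $|k(-r,\a)|\le |f(z)|$ with $|k(-r,\a)|=\bigl[\frac{1}{\a}\int_0^r\rho^{1/\a-1}(1+\rho)^{-2/\a}\,d\rho\bigr]^{\a}$, which is the convention intended in Goodman and the one your proof actually establishes, with sharpness at $z=\pm r$ for $f=k(\cdot,\a)$ exactly as you claim.
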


\begin{lemma}\cite [Theorem 2, p. 208] {KUL}
Let $f(z) = \sum_{n=0}^\infty a_nz^n \in \a-\mathcal{CV}$ and let $S(n)$ be the set of all $n$-tuples $(x_1,x_2, \cdots ,x_n)$ of non-negative integers for which $\sum\limits_{i=1}^{n}i x_i =n$ and for each $n$-tuple define $q$ by $\sum\limits_{i=1}^{n}x_i = q$. If \[\vU(\a,q) = \a(\a-1)(\a-2) \cdots (\a-q)\]
with
$\vU(\a,0) = \a$, then for $n =1,2, \cdots$
\begin{equation} \label{coeffCV}
|a_{n+1}| \le \sum\dfrac{\vU(\a,q-1)c_1^{x_1}c_2^{x_2} \cdots c_n^{x_n}}{x_1!x_2! \cdots x_n!},
\end{equation}
where summation is taken over all $n$-tuples in $S(n)$ and
\[c_n = \frac{1}{n!\a^n(1+n \a) } \prod\limits_{k=0}^{n-1}(2+k \a).\]
The result is sharp.
\end{lemma}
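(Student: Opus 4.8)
The plan is to reduce the coefficient problem for the Mocanu class $\alpha$-$\mathcal{CV}$ (here $\alpha>0$) to the classical coefficient theory of starlike functions via a representation formula, and then to read off the coefficients through the generalized multinomial theorem. First I would record the identity that drives everything: writing $w(z)=zf'(z)/f(z)$, a direct differentiation gives $1+zf''(z)/f'(z)=w(z)+zw'(z)/w(z)$, so the Mocanu operator collapses to
\[\alpha\left(1+\frac{zf''(z)}{f'(z)}\right)+(1-\alpha)\frac{zf'(z)}{f(z)}=w(z)+\alpha\frac{zw'(z)}{w(z)}.\]
Setting the right-hand side equal to $zg'(z)/g(z)$ and integrating (using $f(0)=0$, $f'(0)=1$) yields $g(z)=f(z)^{1-\alpha}(zf'(z))^{\alpha}$, and the condition $\RE[\,\cdot\,]\ge 0$ is precisely the assertion that this $g$ is starlike. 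Solving back for $f$ produces the integral representation $f(z)=\left[\frac{1}{\alpha}\int_0^{z} g(t)^{1/\alpha}\,t^{-1}\,dt\right]^{\alpha}$; expanding the integrand and integrating termwise turns this into $f(z)=z\,\Psi(z)^{\alpha}$, where $\Psi(z)=1+\sum_{n\ge 1}d_n z^n$ with $d_n=\gamma_n/(1+n\alpha)$ and $(g(z)/z)^{1/\alpha}=1+\sum_{n\ge 1}\gamma_n z^n$.

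Next, since $a_{n+1}$ is the coefficient of $z^{n+1}$ in $f$, it equals the coefficient of $z^{n}$ in $\Psi^{\alpha}=(1+\sum_{i\ge 1}d_i z^i)^{\alpha}$. Expanding by the generalized binomial theorem $(1+u)^{\alpha}=\sum_q \binom{\alpha}{q}u^q$ and then the multinomial theorem for $u^q=(\sum_i d_i z^i)^q$, and collecting the terms with $\sum_i i x_i=n$ and $\sum_i x_i=q$, I obtain
\[a_{n+1}=\sum_{(x_1,\dots,x_n)\in S(n)}\frac{\vU(\alpha,q-1)}{x_1!\cdots x_n!}\,d_1^{x_1}\cdots d_n^{x_n},\]
because $\binom{\alpha}{q}q!=\alpha(\alpha-1)\cdots(\alpha-q+1)=\vU(\alpha,q-1)$. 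This is exactly the stated right-hand side but with the $d_i$ in place of the $c_i$.

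The remaining analytic input is the coefficient bound $|d_n|\le c_n$. Writing $p(z)=zg'(z)/g(z)=1+\sum_{n\ge 1}p_n z^n$, the starlikeness of $g$ gives $\RE p>0$, so Carath\'eodory's inequality yields $|p_n|\le 2$. Logarithmic differentiation of $\Phi(z):=(g(z)/z)^{1/\alpha}$ gives $z\Phi'(z)=\frac{1}{\alpha}(p(z)-1)\Phi(z)$, which in coefficients is the recursion $n\gamma_n=\frac{1}{\alpha}\left(p_n+\sum_{m=1}^{n-1}p_{n-m}\gamma_m\right)$. An induction on $n$ using $|p_k|\le 2$ then shows $|\gamma_n|\le\gamma_n^{*}$, where $\gamma_n^{*}$ are the (positive) coefficients of the Koebe-extremal $(1-z)^{-2/\alpha}$; since $d_n=\gamma_n/(1+n\alpha)$ and $c_n=\gamma_n^{*}/(1+n\alpha)$, this is precisely $|d_n|\le c_n$, with equality throughout when $g$ is the Koebe function.

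Finally I would pass from $|d_n|\le c_n$ to the stated inequality, and I expect this to be the \emph{main obstacle}. Because $\vU(\alpha,q-1)=\alpha(\alpha-1)\cdots(\alpha-q+1)$ changes sign as $q$ grows for non-integer $\alpha$, a term-by-term triangle inequality over $S(n)$ does not apply and in fact points the wrong way (already the Miller-type convolution recurrence for the coefficients of $\Psi^{\alpha}$ carries sign-changing weights). The resolution is to recognize that the right-hand side of \eqref{coeffCV} is exactly the $(n+1)$st Taylor coefficient of the extremal function $k(z,\alpha)=z\,\Psi^{*}(z)^{\alpha}$ with $\Psi^{*}(z)=1+\sum c_i z^i$ (the Koebe case of Lemma \ref{distCV}), whose coefficients are real and nonnegative; thus the content of the lemma is that $k(z,\alpha)$ maximizes $|a_{n+1}|$ over $\alpha$-$\mathcal{CV}$. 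I would establish this as a coefficient-domination statement for the whole power series $\Psi^{\alpha}$—not term by term—using $|d_i|\le c_i$ together with the sign-definite structure of the Koebe extremal, the fully nonnegative case (every $\vU(\alpha,q-1)\ge 0$, where the triangle inequality does apply directly) serving as the model. This simultaneously yields the bound and, since $k(z,\alpha)$ realizes equality, the sharpness.
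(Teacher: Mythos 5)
A preliminary remark: the paper contains no proof of this lemma at all --- it is quoted verbatim, with citation, from Kulshrestha \cite{KUL}, and is used as a black box in Section 5. So there is no ``paper proof'' to compare against; your attempt has to stand on its own, and it does not.

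What you do up to the last step is correct and standard: $g=f^{1-\alpha}(zf')^{\alpha}$ is starlike precisely when $f$ is $\alpha$-convex; inverting gives $f(z)=z\,\Psi(z)^{\alpha}$ with $\Psi(z)=1+\sum_{n\ge1}d_nz^n$ and $d_n=\gamma_n/(1+n\alpha)$, where $(g(z)/z)^{1/\alpha}=1+\sum\gamma_nz^n$; the binomial/multinomial expansion gives $a_{n+1}=\sum_{S(n)}\vU(\alpha,q-1)\,d_1^{x_1}\cdots d_n^{x_n}/(x_1!\cdots x_n!)$ since $\binom{\alpha}{q}q!=\vU(\alpha,q-1)$; and the Carath\'eodory-plus-induction argument does yield $|\gamma_n|\le\gamma_n^{*}$, i.e.\ $|d_n|\le c_n$. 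The fatal problem is the final step, which you openly defer (``I would establish this as a coefficient-domination statement\ldots''). That step is not a detail; it is the entire theorem, and the route you sketch cannot close it, because the claimed inequality is \emph{false} under the only quantitative hypotheses you allow yourself, namely $|d_i|\le c_i$ together with structural properties of the $c_i$. Concretely, take $\alpha=1/2$ and $n=2$: the coefficient functional is $\tfrac12 d_2-\tfrac18 d_1^2$, while $c_1=8/3$, $c_2=5$, so the asserted majorant is $\tfrac12 c_2-\tfrac18 c_1^2=29/18\approx1.61$. The choice $d_1=0$, $d_2=c_2=5$ satisfies $|d_i|\le c_i$ yet gives $\bigl|\tfrac12 d_2-\tfrac18 d_1^2\bigr|=5/2>29/18$. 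Of course this pair $(d_1,d_2)$ arises from no starlike $g$: if $g=z+b_2z^2+b_3z^3+\cdots$ is starlike with $b_2=0$, then $\gamma_2=2b_3$ and $|d_2|=|b_3|\le3<5$. That is exactly the point --- the lemma rests on the joint, correlated constraints that starlikeness imposes on all the $\gamma_n$ simultaneously, not on the separate modulus bounds $|d_n|\le c_n$; any argument consuming only the latter, however cleverly it exploits the positivity or ``sign-definite structure'' of the Koebe side, is doomed from the start. So your proposal proves the correct identity for $a_{n+1}$ and the correct bound $|d_n|\le c_n$, but the passage from these to \eqref{coeffCV} --- which is where Kulshrestha's actual work lies --- is a genuine gap, and the strategy you indicate for filling it cannot succeed.
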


\begin{theorem}
Let $\a>0$ and $f(z) = z+\sum\limits_{n=2}^{\infty}a_n z^n \in \a-\mathcal{CV}$. Then
\[|z|+\sum\limits_{n=2}^{\infty}|a_n z^n| \le d(0,\partial f(\D))\]
for $|z| \le r^*$, where $r^* \in (0,1]$ is the positive root of the equation
\[r+ \sum\limits_{n=2}^{\infty}\left(\sum\dfrac{\vU(\a,q-1)c_1^{x_1}c_2^{x_2} \cdots c_{n-1}^{x_{n-1}}}{x_1!x_2! \cdots x_{n-1}!}\right)r^n = k(-1,\a)\] and where summation is as in \eqref{coeffCV}.
The result is sharp.
\end{theorem}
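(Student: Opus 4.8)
The plan is to mirror the proof of Theorem \ref{th4}, since the statement has exactly the same structure: bound the sum of absolute values of the coefficients by a series built from the sharp coefficient estimates, and bound the distance to the boundary from below using the distortion (growth) theorem. First I would invoke the growth theorem for $\a$-convex functions, namely Lemma \ref{distCV}, to compute the distance from the origin to the boundary of $f(\D)$. Since $f$ is $\a$-convex with $f(0)=0$, the infimum of $|f(\xi)|$ over $\xi \in \partial f(\D)$ equals $\lim_{r\to 1^-} k(-r,\a) = k(-1,\a)$, giving
\[
d(0,\partial f(\D)) = \inf_{\xi \in \partial f(\D)}|f(\xi)| = k(-1,\a).
\]

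Next I would bound the Bohr sum. Writing $f(z) = z + \sum_{n=2}^{\infty} a_n z^n$ and shifting the index in Lemma~\ref{KUL} (replacing $n+1$ by $n$, so that the bound on $|a_n|$ involves the $(n-1)$-tuples in $S(n-1)$), the sharp coefficient estimate \eqref{coeffCV} gives, for $|z|=r$,
\[
|z| + \sum_{n=2}^{\infty}|a_n z^n| \le r + \sum_{n=2}^{\infty}\left(\sum \dfrac{\vU(\a,q-1)c_1^{x_1}c_2^{x_2}\cdots c_{n-1}^{x_{n-1}}}{x_1! x_2! \cdots x_{n-1}!}\right) r^n,
\]
the inner summation being over the $(n-1)$-tuples as in \eqref{coeffCV}. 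By definition $r^*$ is the positive root at which the right-hand side equals $k(-1,\a)$; since each coefficient of this power series is nonnegative the series is strictly increasing in $r$ on $(0,1]$, so for every $r \le r^*$ the right-hand side is at most $k(-1,\a) = d(0,\partial f(\D))$, which is precisely the desired Bohr inequality.

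For sharpness I would exhibit the extremal $\a$-convex function $f_0$ for which \eqref{coeffCV} holds with equality for every $n$; this is the same function that makes Lemma~\ref{distCV} sharp (the generalized Koebe-type function $k(z,\a)$ defined in \eqref{eq 3}). For this function all $a_n$ are nonnegative and attain the bounds, so at $|z|=r^*$ the Bohr sum equals the right-hand side above, which equals $k(-1,\a) = d(0,\partial f_0(\D))$; hence the inequality cannot hold beyond $r^*$ and the radius is best possible.

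The routine obstacle is only bookkeeping: I must verify that the extremal function for the coefficient bound and the extremal function for the growth bound coincide, so that equality holds simultaneously in both estimates at $|z|=r^*$ — this is what legitimizes the sharpness claim. The more delicate point is the index shift in Lemma~\ref{KUL}: the lemma bounds $|a_{n+1}|$ via tuples in $S(n)$, whereas the theorem is phrased in terms of $|a_n|$ via tuples in $S(n-1)$, so I would be careful to state the reindexing explicitly and confirm that the $c_j$ appearing run up to $c_{n-1}$, matching the displayed equation. Beyond that, the existence and uniqueness of the root $r^* \in (0,1]$ follows from monotonicity of the (nonnegative-coefficient) power series together with the value of $k(-1,\a)$, so no genuine analytic difficulty arises.
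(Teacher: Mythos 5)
Your proposal follows the paper's proof essentially verbatim: the same coefficient bound from Lemma~\ref{KUL} with the same index shift, the same growth bound from Lemma~\ref{distCV}, the same monotonicity argument for the radius, and the same extremal function $k(z,\a)$ for sharpness. One caveat: for a general $f\in\a-\mathcal{CV}$ the growth theorem only gives $d(0,\partial f(\D)) \ge k(-1,\a)$, not the equality you assert (equality holds just for the extremal function); the paper states only the inequality, and since that is all your chain of estimates actually uses, the overstatement is harmless.
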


\begin{proof} Let $f\in \a-\mathcal{CV}$.
By Lemma \ref{distCV}, the growth inequality for the function $f $ is given by
\[k(-r,\a) \le |f(r \E)| \le k(r,\a), \] where $k$ is the function  as defined  in \eqref{eq 3}.
This immediately shows that
\begin{equation} \label{eqdistCV}
d(0,\partial f(\D)) \geq k(-1,\a).
\end{equation}
It is given that the Bohr radius $r^*$ is the root of the equation
\[r+ \sum\limits_{n=2}^{\infty}\left(\sum\dfrac{\vU(\a,q-1)c_1^{x_1}c_2^{x_2} \cdots c_{n-1}^{x_{n-1}}}{x_1!x_2! \cdots x_{n-1}!}\right)r^n = k(-1,\a).\]
For $0\leq r\leq r^*$, it is readily seen that $k(r,\a)\leq k(-1,\a)$.
Using (\ref{coeffCV}) and (\ref{eqdistCV}),we have

\begin{align*}
|z|+\sum\limits_{n=2}^{\infty}|a_n z^n|& \le r+ \sum\limits_{n=2}^{\infty}\left(\sum\dfrac{\vU(\a,q-1)c_1^{x_1}c_2^{x_2} \cdots c_{n-1}^{x_{n-1}}}{x_1!x_2! \cdots x_{n-1}!}\right)r^n
 \le k(-1,\a)
 \le d(0,\partial f(\D))
\end{align*}
for $|z| =r \le r^*$.
In order to prove the sharpness, consider the function
\[f(z)=k(z,\a)= \left[\dfrac{1}{\a}\int\limits_{0}^{z}\dfrac{\xi^{\frac{1}{\a}-1}d \xi}{(1-\xi)^{\frac{2}{\a}}}\right]^\a.\]
For $|z| = r^*$, we obtain
\begin{align*}
|z|+\sum\limits_{n=2}^{\infty}|a_n z^n|=r^*+ \sum\limits_{n=2}^{\infty}\left(\sum\dfrac{\vU(\a,q-1)c_1^{x_1}c_2^{x_2} \cdots c_{n-1}^{x_{n-1}}}{x_1!x_2! \cdots x_{n-1}!}\right)(r^*)^n&=k(-1,\a)\\
&=d(0,\partial f(\D)).
\end{align*}
Thus the result is sharp.
\end{proof}

\section{Typically Real Functions} \label{sec6}

The class of typically real functions was introduced by Rogosinski \cite{ROG}. An analytic function $f(z) = z+\sum\limits_{n=2}^{\infty} a_nz^n$ which satisfies the condition $\text{sign}(\IM f(z)) = \text{sign}(\IM z)$
for non real $z \in \D$  is said to be typically real in $\D$. The class of such functions is denoted by $\mathcal{TR}$.
In the present section, we obtain the Bohr radius for the class of typically real functions.

\begin{lemma}\cite [Theorem 3, p. 185] {GOODMAN}\label{coeffTR}
If $f(z)$ is in $\mathcal{TR}$ and $z = r\E \in \D$, then the coefficients satisfy the inequality
\begin{equation} \label{eqnTR}
m_n \le a_n \le n
\end{equation}
where $m_n = \min(\sin n\theta/\sin \theta)$ for each $n$. The inequality is sharp for each $n$.
\end{lemma}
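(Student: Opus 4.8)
The plan is to base the proof on the classical integral representation of typically real functions together with an expansion of its kernel in Chebyshev polynomials of the second kind. Recall that every $f \in \mathcal{TR}$ of the form $f(z) = z + \sum_{n=2}^{\infty} a_n z^n$ admits a representation
\[f(z) = \int_{-1}^{1} \frac{z}{1 - 2tz + z^2}\, d\mu(t),\]
where $\mu$ is a probability measure on $[-1,1]$. First I would fix the substitution $t = \cos\phi$ with $\phi \in [0,\pi]$ and record the classical generating identity
\[\frac{z}{1 - 2z\cos\phi + z^2} = \sum_{n=1}^{\infty} \frac{\sin n\phi}{\sin\phi}\, z^n,\]
which one checks by factoring the denominator as $(1 - ze^{i\phi})(1 - ze^{-i\phi})$ and summing the two geometric series. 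Writing $U_{n-1}(\cos\phi) = \sin n\phi/\sin\phi$ for the Chebyshev polynomial of the second kind and interchanging the locally uniformly convergent sum with the integral then yields the coefficient formula $a_n = \int_{-1}^{1} U_{n-1}(t)\, d\mu(t)$ for $n \ge 1$.

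Because $\mu$ is a probability measure, this formula at once sandwiches each coefficient between the extreme values of the kernel polynomial, namely $\min_{t \in [-1,1]} U_{n-1}(t) \le a_n \le \max_{t \in [-1,1]} U_{n-1}(t)$. The next step is to evaluate the two extrema. For the upper bound I would invoke the elementary inequality $|\sin n\phi| \le n\,\sin\phi$ on $[0,\pi]$, giving $U_{n-1}(t) \le n$ throughout $[-1,1]$ with equality at $t = 1$, where $U_{n-1}(1) = n$; hence the maximum equals $n$. For the lower bound, passing back to the angular variable shows that $\min_{t}U_{n-1}(t)$ equals $\min_\theta (\sin n\theta/\sin\theta) = m_n$ by definition, since $t = \cos\theta$ sweeps out $[-1,1]$ as $\theta$ runs over $[0,\pi]$. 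Combining the two evaluations delivers the asserted inequality $m_n \le a_n \le n$.

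For sharpness I would exhibit extremal members of $\mathcal{TR}$ as point masses: taking $\mu = \delta_1$ gives the Koebe function $z/(1-z)^2 = \sum_{n\ge1} n z^n$, for which $a_n = n$, while taking $\mu = \delta_{t_0}$ with $t_0 = \cos\theta_0$ a point where $\sin n\theta/\sin\theta$ attains its minimum produces a function with $a_n = m_n$. The only genuinely delicate point is confirming that the maximum of $U_{n-1}$ on $[-1,1]$ equals $n$ and is attained exactly at the endpoint $t = 1$ rather than in the interior; this rests on the bound $|\sin n\phi| \le n\sin\phi$, which I would prove by induction using $\sin(n+1)\phi = \sin n\phi\,\cos\phi + \cos n\phi\,\sin\phi$ together with $|\cos\phi| \le 1$ and $\sin\phi \ge 0$ on $[0,\pi]$. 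Everything else reduces to the representation theorem and routine manipulation of the kernel expansion.
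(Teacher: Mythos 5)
The paper offers no proof of this lemma at all: it is quoted verbatim from Goodman (Theorem 3, p.~185), so there is nothing internal to compare against. Your argument---the Robertson integral representation $f(z)=\int_{-1}^{1} z\,(1-2tz+z^2)^{-1}\,d\mu(t)$ over probability measures, the expansion of the kernel in $U_{n-1}(\cos\phi)=\sin n\phi/\sin\phi$, the resulting formula $a_n=\int_{-1}^{1}U_{n-1}(t)\,d\mu(t)$, the bound $|\sin n\phi|\le n\sin\phi$, and sharpness via point masses $\delta_1$ (Koebe) and $\delta_{t_0}$---is correct, complete, and is in fact the classical proof underlying the cited reference.
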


\begin{lemma}\cite[Thm 1, p. 136] {REMI}\label{lem3}
Let the function $f$ be in $\mathcal{TR}$ and $z = r \E \in \D$. Then
\begin{equation} \label{eqnTR2}
|f(z)| \ge  \left\{ \begin{array}{ll}
                \left|\dfrac{z}{(1+z)^2}\right|, & \mbox{if $\RE\left(\dfrac{1+z^2}{z}\right) \ge 2$}; \\
                   \dfrac{r(1-r^2)|\sin \theta|}{|1-z^2|^2}, & \mbox{if $\RE\left|\dfrac{1+z^2}{z}\right| \le 2$};\\
                   \left|\dfrac{z}{(1-z)^2}\right|, & \mbox{if $\RE\left(\dfrac{1+z^2}{z}\right) \le -2.$}
                             \end{array}
                       \right.
                       \end{equation}
 The result is sharp.
\end{lemma}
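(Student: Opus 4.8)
The plan is to reduce this pointwise minimum-modulus estimate to a convex-geometry problem by invoking the classical Rogosinski--Robertson integral representation of typically real functions: every $f\in\mathcal{TR}$ can be written as
\[
f(z)=\int_{-1}^{1}\frac{z}{1-2tz+z^{2}}\,d\mu(t)=\int_{-1}^{1}K_{t}(z)\,d\mu(t),
\]
where $\mu$ is a probability measure on $[-1,1]$ and $K_{t}(z)=z/(1-2tz+z^{2})$. Fixing $z=r\E\in\D$, the admissible values $f(z)$ fill out exactly the closed convex hull $C$ of the arc $A=\{K_{t}(z):t\in[-1,1]\}$, so the sharp lower bound for $|f(z)|$ is precisely the distance $d(0,C)$. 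I would compute this distance through its support-function/separating-line description
\[
d(0,C)=\max_{\psi}\ \min_{t\in[-1,1]}\RE\!\left(e^{-i\psi}K_{t}(z)\right),
\]
so that exhibiting a single direction $e^{i\psi}$ with $\RE(e^{-i\psi}K_{t}(z))\ge m$ for all $t$ yields $|f(z)|\ge m$ for every $f\in\mathcal{TR}$.

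The key simplification is that $1/K_{t}(z)=(z+1/z)-2t$, so as $t$ runs over $[-1,1]$ the reciprocal $1/K_{t}(z)$ traces the horizontal segment $S$ with fixed imaginary part $\IM(z+1/z)$ and real part running over $[\RE(z+1/z)-2,\ \RE(z+1/z)+2]$; under the inversion $w\mapsto 1/w$ this segment maps to an arc $A$ of the circle through the origin determined by $S$. Writing $s=\RE(z+1/z)-2t$ and $c=\IM(z+1/z)=(r-1/r)\sin\theta$, the optimization reduces to minimising $g(s)=\big(s\cos\psi-c\sin\psi\big)/(s^{2}+c^{2})$ over the interval $s\in[\RE(z+1/z)-2,\RE(z+1/z)+2]$. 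The three cases in the statement are governed by the position of $S$ relative to the imaginary axis, i.e.\ by the sign and size of $\RE((1+z^{2})/z)=\RE(z+1/z)$: when $\RE(z+1/z)\ge 2$ (resp.\ $\le-2$) the segment lies in a closed half-plane, the nearest hull point is the arc endpoint $K_{-1}(z)=z/(1+z)^{2}$ (resp.\ $K_{1}(z)=z/(1-z)^{2}$); when $|\RE(z+1/z)|\le2$ the segment crosses the imaginary axis, the arc sweeps past the point of the circle farthest from the origin, and the nearest hull point lies on the chord joining $K_{1}(z)$ and $K_{-1}(z)$.

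To finish, in the first and third cases I would verify the endpoint is genuinely the nearest point of $C$ (monotonicity of $g$ in $s$) and read off $|K_{\mp1}(z)|=|z/(1\pm z)^{2}|$. In the middle case I would compute the distance from the origin to the line through $K_{1}(z)$ and $K_{-1}(z)$, namely $|\IM(\overline{K_{1}}\,K_{-1})|/|K_{1}-K_{-1}|$; since $K_{1}(z)-K_{-1}(z)=4z^{2}/(1-z^{2})^{2}$ and $\overline{K_{1}(z)}\,K_{-1}(z)=r^{2}/\big[(1-\bar z)(1+z)\big]^{2}$ with $|(1-\bar z)(1+z)|^{2}=|1-z^{2}|^{2}$, this collapses to $r(1-r^{2})|\sin\theta|/|1-z^{2}|^{2}$, exactly the stated bound. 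Sharpness is then immediate: the kernels $K_{\mp1}$, which are Koebe-type functions, are extremal in the outer cases, while in the middle case a two-point measure $\mu$ placing mass at $t=\pm1$ puts $f(z)$ at the foot of the perpendicular from the origin to the chord.

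The main obstacle I anticipate is the case-2 assertion that the nearest point of the convex hull actually lies on the chord rather than on the arc --- equivalently, that the origin is separated from the bulk of the arc by the chord. Establishing this cleanly, and simultaneously confirming the endpoint optimality in cases 1 and 3, is where the careful monotonicity analysis of $g(s)$, together with control of the sign of $c=(r-1/r)\sin\theta$, must be carried out; everything beyond that is the routine algebra recorded above.
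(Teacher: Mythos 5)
The paper does not actually prove this lemma---it is imported verbatim from Remizova \cite{REMI} as a known result---so there is no in-paper argument to compare yours against; what you have written is a genuine proof from scratch, and it is essentially correct. The reduction via Robertson's representation $f(z)=\int_{-1}^{1}K_t(z)\,d\mu(t)$, $K_t(z)=z/(1-2tz+z^2)$, to computing the distance from $0$ to the convex hull $C$ of the arc $\{K_t(z):t\in[-1,1]\}$ is legitimate (the image of the probability measures under this linear map is exactly that hull, which is compact), the inversion trick $1/K_t(z)=z+1/z-2t$ is the right simplification, and your algebra for the middle case is correct. Moreover, the obstacle you flag at the end resolves cleanly inside your own framework: writing $R=\RE(z+1/z)$, $c=\IM(z+1/z)$, so that $K_{\pm1}(z)=1/(R\mp2+ic)$, the endpoint-versus-chord question reduces to two sign checks, namely
\begin{equation*}
\RE\big[(0-K_{-1})\overline{(K_1-K_{-1})}\big]=\frac{-4(R-2)}{\big[(R+2)^2+c^2\big]\big[(R-2)^2+c^2\big]},\qquad
\RE\big[(0-K_{1})\overline{(K_{-1}-K_{1})}\big]=\frac{4(R+2)}{\big[(R+2)^2+c^2\big]\big[(R-2)^2+c^2\big]}.
\end{equation*}
For $R\ge2$ the first quantity is $\le0$, so the nearest point of the chord is the endpoint $K_{-1}$, and since $(R-2t)^2+c^2$ is maximized at $t=-1$ the nearest point of the arc is also $K_{-1}$; hence $d(0,C)=|K_{-1}(z)|=|z|/|1+z|^2$, and symmetrically $K_1$ for $R\le-2$. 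For $|R|\le2$ both quantities are $\ge0$, so the foot of the perpendicular from $0$ to the chord line lies on the chord; since the chord separates the origin (which lies on the complementary arc of the circle through $0$) from $C$, the distance to $C$ equals the distance to the chord line, which your computation shows is $r(1-r^2)|\sin\theta|/|1-z^2|^2$. Sharpness is as you say: point masses at $t=\mp1$ in the outer cases, and the appropriate two-point measure in the middle case. So your sketch is sound; to make it a complete proof you only need to record these two sign computations in place of the anticipated ``careful monotonicity analysis.''
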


The next theorem gives the sharp Bohr radius for the class $\mathcal{TR}$.

\begin{theorem}
Let $f(z) = z+ \sum\limits_{n=2}^{\infty}a_nz^n$ be in the class $\mathcal{TR}$ and $z = r \E \in \D$. Then
\[|z|+\sum\limits_{n=2}^{\infty}|a_n z^n| \le d(0,\partial f(\D))\]
for $|z| \le 3-2\sqrt2\approx0.171573$. The result is sharp.
\end{theorem}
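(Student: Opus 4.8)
The plan is to mirror the template of Theorem \ref{th4} and the $\a$-convex case: bound the Bohr sum from above using the coefficient estimate, bound the boundary distance from below using the growth theorem, and then equate the two bounds to read off the radius. The extremal function throughout should be the Koebe function $k(z)=z/(1-z)^2=\sum_{n=1}^{\infty}nz^n$, which lies in $\mathcal{TR}$ and saturates both estimates simultaneously.

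First I would use the coefficient bound $|a_n|\le n$ from Lemma \ref{coeffTR} to write
\[
|z|+\sum_{n=2}^{\infty}|a_n z^n|\le \sum_{n=1}^{\infty}nr^n=\frac{r}{(1-r)^2},\qquad |z|=r,
\]
so the left-hand side of the asserted inequality never exceeds $r/(1-r)^2$.

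The decisive step is the lower bound $d(0,\partial f(\D))\ge 1/4$ for every $f\in\mathcal{TR}$, and I expect this to be the main obstacle. Here I would invoke Lemma \ref{lem3}. Letting $r\to1$ in the three regimes, the first branch gives $|f(z)|\ge|z/(1+z)^2|\to 1/4$ as $z\to1$, and the third branch gives $|f(z)|\ge|z/(1-z)^2|\to 1/4$ as $z\to-1$; thus the boundary values in the two extremal directions $z\to\pm1$ have modulus at least $1/4$. The difficulty is the intermediate regime (the second branch), where the stated lower bound $r(1-r^2)|\sin\theta|/|1-z^2|^2$ tends to $0$ as $r\to1$ and so cannot by itself force $|f|\ge1/4$. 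The point to argue carefully is that the small boundary \emph{values} produced in this middle range are attained as \emph{interior} points of $f(\D)$: the typically real image folds across the real axis, as one already sees for $\tfrac12\bigl(z/(1-z)^2+z/(1+z)^2\bigr)$, whose image is all of $\C$. Hence such values do not contribute to $\partial f(\D)$, and the genuinely omitted values governing $d(0,\partial f(\D))$ are controlled by the extremal directions alone. Equivalently, one establishes the covering $\{|w|<1/4\}\subset f(\D)$, using that $f$ carries the upper (lower) half-disc into the upper (lower) half-plane together with the boundary estimates at $z=\pm1$; this yields $d(0,\partial f(\D))\ge 1/4$.

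Granting the distance bound, I would combine it with the coefficient estimate to get
\[
|z|+\sum_{n=2}^{\infty}|a_n z^n|\le \frac{r}{(1-r)^2}\le \frac14\le d(0,\partial f(\D)),
\]
where the middle inequality holds precisely for $r\le r^*$, with $r^*$ the smaller root of $r/(1-r)^2=1/4$, i.e.\ of $r^2-6r+1=0$, namely $r^*=3-2\sqrt2$. Finally, for sharpness I would test the Koebe function $k(z)=z/(1-z)^2$: it is typically real with $a_n=n$, its image $\C\setminus(-\infty,-1/4]$ satisfies $d(0,\partial k(\D))=1/4$, and at $|z|=r^*$ one obtains $|z|+\sum_{n\ge2}|a_nz^n|=r^*/(1-r^*)^2=1/4=d(0,\partial k(\D))$, so equality holds and the radius $3-2\sqrt2$ cannot be enlarged.
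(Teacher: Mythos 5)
Your proposal follows essentially the same route as the paper: the coefficient bound $|a_n|\le n$ gives the majorant $r/(1-r)^2$, the lower bound $d(0,\partial f(\D))\ge 1/4$ does the real work, and the Koebe function $z/(1-z)^2$ (the paper's $l_t$ with $t=0$) shows sharpness at $r^*=3-2\sqrt2$. The covering fact $\{|w|<1/4\}\subset f(\D)$ that you correctly identify as the crux --- and sketch via the folding of the typically real image across the real axis --- is precisely the Brannan--Kirwan covering theorem, which the paper invokes by citation rather than re-deriving.
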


\begin{proof}
Let $f(z) = z +\sum\limits_{n=2}^{\infty}a_nz^n \in \mathcal{TR}$.
By Lemma \ref{lem3} and the proof of Theorem 2 (see \cite{bra}),  it follows that the distance between the origin and the
boundary of $f(\D)$ satisfies the inequality
\begin{equation}\label{eq 4}
|f(z)| \ge \frac{1}{4}, z \in \D.
\end{equation}
Using the inequality \eqref{eq 4}, we have
\begin{equation} \label{eqdistTR}
d(0,\partial f(\D)) = \inf_{\xi\in\partial f(\D)} |f(\xi)| \ge \frac{1}{4}.
\end{equation}
In view of inequalities (\ref{eqnTR}) and (\ref{eqdistTR}), we have
\begin{align*}
|z|+\sum\limits_{n=2}^{\infty}|a_n z^n| & \le r+ \sum\limits_{n=2}^{\infty}n r^n =\frac{r}{(1-r)^2}\le \frac{1}{4}\le d(0,\partial f(\D))
\end{align*}
if $r<3-2\sqrt2$.
In order to prove the sharpness, for $-\pi<t\leq \pi$, consider the function $l_t:\D\rightarrow\C$ defined by
\[l_t(z) = \frac{z}{(1-2z\cos t+z^2)}.\]
 For $|z| = r^*$ and $t=0$, we obtain
\begin{align*}
|z|+\sum\limits_{n=2}^{\infty}|a_n z^n| &= r^*+ \sum\limits_{n=2}^{\infty}n(r^*)^n =\frac{r^*}{(1-r^*)^2}
=\frac{1}{4} = d(0,\partial f(\D)).
\end{align*}
Thus the result is sharp.
\end{proof}


\begin{thebibliography}{99}
\bibitem{pon1}Y. Abu Muhanna, R. M. Ali\ and\ S. Ponnusamy, On the Bohr inequality, in  "Progress in approximation theory and applicable complex analysis",  Springer Optim. Appl.  {\bf 117} (2016),  265--295.
\bibitem{ABU}Y. Abu Muhanna, R. M. Ali, Z. C. Ng\ and\  M. F. Siti, Bohr radius for subordinating families of analytic functions and bounded harmonic mappings, J. Math. Anal. Appl. {\bf 420} (2014), no.~1, 124--136.

\bibitem{AHU}O. P. Ahuja, S. Anand\ and\ N. K. Jain, Bohr radius problems for some classes of analytic functions using quantum calculus approach, Mathematics {\bf 8} (2020), 623.

\bibitem{Alex} R. M. Ali, R. W. Barnard\ and\ A. Yu. Solynin, A note on Bohr's phenomenon for power series, J. Math. Anal. Appl. {\bf 449} (2017), no.~1, 154--167.
\bibitem{Ne1}R. M. Ali\ and\ Z. C. Ng, The Bohr inequality in the hyperbolic plane, Complex Var. Elliptic Equ. {\bf 63} (2018), no.~11, 1539--1557.

\bibitem{ali1}R. M. Ali, N. K. Jain\ and\ V. Ravichandran, Bohr radius for classes of analytic functions, Results Math. {\bf 74} (2019), no.~4, Art. 179, 13pp.

\bibitem{Ponnu1}S. A. Alkhaleefah, I. R. Kayumov\ and\ S. Ponnusamy, On the Bohr inequality with a fixed zero coefficient, Proc. Amer. Math. Soc. {\bf 147} (2019), no.~12, 5263--5274.
\bibitem{aou}M. K. Aouf, On a class of $p$-valent starlike functions of order $\alpha$, Internat. J. Math. Math. Sci. {\bf 10} (1987), no.~4, 733--744.
\bibitem{bho} B. Bhowmik\ and\ N. Das, Bohr phenomenon for subordinating families of certain univalent functions, J. Math. Anal. Appl. {\bf 462} (2018), no.~2, 1087--1098.
\bibitem{B1}H. Bohr,  A theorem concerning power series,  Proc. London Math. Soc. (2) {\bf 13} (1914),  1--5.
\bibitem{bra}D. A. Brannan\ and\ W. E. Kirwan, A covering theorem for typically real functions, Glasgow Math. J. {\bf 10} (1969), 153--155.
\bibitem{D1}P. G. Dixon,  Banach algebras satisfying the non-unital von Neumann inequality,  Bull. London Math. Soc. {\bf 27} (1995),  no.~4,  359--362.
\bibitem{gao} C. Y. Gao\ and\ S. Q. Zhou, Certain subclass of starlike functions, Appl. Math. Comput. {\bf 187} (2007), no.~1, 176--182.

\bibitem{GOODMAN} A. W. Goodman, {\it Univalent functions. Vol. I}, Mariner Publishing Co., Inc., Tampa, FL, 1983.

\bibitem{NKJ} N. K. Jain and S. Yadav, Bohr radius for certain analytic functions, in "Mathematical Analysis I: Approximation Theory" {\bf 306} (2018), 211--221,  Springer Proceedings in Mathematics and Statistics, Springer, Singapore.
\bibitem{jan2}W. Janowski, Extremal problems for a family of functions with positive real part and for some related families, Ann. Polon. Math. {\bf 23} (1970/1971), 159--177.
\bibitem{jan1} W. Janowski, Some extremal problems for certain families of analytic functions. I, Ann. Polon. Math. {\bf 28} (1973), 297--326.
\bibitem{Ponnu3} I. R. Kayumov\ and\ S. Ponnusamy, On a powered Bohr inequality, Ann. Acad. Sci. Fenn. Math. {\bf 44} (2019), no.~1, 301--310.
\bibitem{Ponnu5}I. R. Kayumov and S. Ponnusamy, Improved Version of Bohr's Inequality,   C. R. Math. Acad. Sci. Paris  {\bf 356}(2018), no.~3, 272--277.
\bibitem{KUL}P. K. Kulshrestha, Coefficient problem for alpha-convex univalent functions, Arch. Rational Mech. Anal. {\bf 54} (1974), 205--211.
\bibitem{ma}W. C. Ma\ and\ D. Minda, A unified treatment of some special classes of univalent functions, in {\it Proceedings of the Conference on Complex Analysis (Tianjin, 1992)}, 157--169, Conf. Proc. Lecture Notes Anal., I, Int. Press, Cambridge.
\bibitem{mac}T. H. MacGregor, The radius of univalence of certain analytic functions, Proc. Amer. Math. Soc. {\bf 14} (1963), 514--520.
\bibitem{MOCANU}P. T. Mocanu, Une propri\'{e}t\'{e} de convexit\'{e} g\'{e}n\'{e}ralis\'{e}e dans la th\'{e}orie de la repr\'{e}sentation conforme, Mathematica (Cluj) {\bf 34} (1969), no.~11, 127--133.
\bibitem{P2}V. I. Paulsen,  G. Popescu\ and\ D. Singh,  On Bohr's inequality,  Proc. London Math. Soc. (3) {\bf 85} (2002),  no.~2,  493--512.
\bibitem{P3}V. I. Paulsen\ and\ D. Singh,  Bohr's inequality for uniform algebras,  Proc. Amer. Math. Soc. {\bf 132} (2004),  no.~12,  3577--3579.
\bibitem{P4}V. I. Paulsen\ and\ D. Singh,  Extensions of Bohr's inequality,  Bull. London Math. Soc. {\bf 38} (2006),  no.~6,  991--999.
\bibitem{REMI} M. P. Remizova, Extremal problems in the class of typically real functions, Izv. Vys\v{s}. U\v{c}ebn. Zaved. Matematika {\bf 32} (1963), no.~1 , 135--144.\bibitem{rob}M. I. S. Robertson, On the theory of univalent functions, Ann. of Math. (2) {\bf 37} (1936), no.~2, 374--408.
\bibitem{ROG}W. Rogosinski, \"{U}ber positive harmonische Entwicklungen und typisch-reelle Potenzreihen, Math. Z. {\bf 35} (1932), no.~1, 93--121.
\bibitem{RUS}S. Ruscheweyh, New criteria for univalent functions, Proc. Amer. Math. Soc. {\bf 49} (1975), 109--115.
\bibitem{HM} H. M. Srivastava, D. R\u{a}ducanu\ and\ P. Zaprawa, A certain subclass of analytic functions defined by means of differential subordination, Filomat {\bf 30} (2016), no.~14, 3743--3757.
\bibitem{Yang} D. G. Yang\ and\ J. L. Liu, A class of analytic functions with missing coefficients, Abstr. Appl. Anal. {\bf 2011}, Art. ID 456729, 16 pp.

\end{thebibliography}
\end{document}